\documentclass[10pt, tikz]{amsart}

\usepackage{txfonts}
\usepackage[T1]{fontenc}
\usepackage{fancyhdr}

\usepackage{geometry}
\geometry{a4paper,top=2.275cm,bottom=2.275cm,left=3cm,right=3cm,heightrounded,bindingoffset=0mm}
\usepackage{amsmath,amsthm,amssymb,hyperref, mdframed}
\usepackage{mathrsfs} 
\usepackage{tikz}
\usepackage[all]{xy}
\usepackage{subfig}
\usepackage{tikz}
\usepackage{subfig}
\usepackage{tabularx}
\usepackage{enumitem}
\usepackage{amsmath,amsthm,amscd}
\usepackage{amssymb,amsfonts}
\usepackage{fancyhdr} % This should be set AFTER setting up the page geometry
\usepackage{pgf,tikz}
\usepackage{tikz,tikz-3dplot}
\usepackage{caption}
\usepackage{tikz-cd}
\usepackage{tikzscale}
\usepackage{rotating}
\usepackage{xcolor}
\usepackage{lscape}
\usepackage{xspace}
\usepackage{xfrac}
\usepackage{blindtext}
\usepackage[ruled,vlined]{algorithm2e}
\usepackage[export]{adjustbox}
\usepackage[dvipsnames]{pstricks}
\usepackage{pst-solides3d}
\usepackage{color}
\usepackage{hyperref}
\hypersetup{
    colorlinks=true, %set true if you want colored links
    linktoc=all,     %set to all if you want both sections and subsections linked
    linkcolor=blue,  %choose some color if you want links to stand out
    citecolor=blue,
    filecolor=blue,
    urlcolor=blue
}

\usetikzlibrary{arrows}
\usetikzlibrary{decorations.markings}
\usetikzlibrary{decorations.markings}
\usetikzlibrary{backgrounds,shapes}
\usetikzlibrary{decorations.markings}
\usetikzlibrary{patterns}
\usetikzlibrary{calc,perspective}

    \pgfdeclarelayer{background} 
    \pgfdeclarelayer{foreground}
    \pgfsetlayers{background,main,foreground}

\setcounter{secnumdepth}{5}
\setcounter{tocdepth}{1}

\definecolor{aliceblue}{rgb}{0.9, 0.95, 1.0}

\definecolor{violet1}{RGB}{69, 14, 71}
\definecolor{color2}{RGB}{162, 165, 149}
\definecolor{color3}{RGB}{180, 162, 138}

\definecolor{cof}{RGB}{219,144,71}
\definecolor{pur}{RGB}{186,146,162}
\definecolor{greeo}{RGB}{91,173,69}
\definecolor{greet}{RGB}{52,111,72}
%\definecolor{color1}{HTML}{1A5960}
%\definecolor{color2}{HTML}{B7DAB9}
%\definecolor{color3}{HTML}{AE5689}

\newcommand{\cm}[1]{{ \textcolor{blue}{#1}} }

\newcommand{\mytriangle}[4] % 3 poiints, 'shadow' opacity
{
  \coordinate (center) at ($1/3*(#1)+1/3*(#2)+1/3*(#3)$);
  \coordinate (m12)    at ($(#1)!0.5!(#2)$);
  \coordinate (m13)    at ($(#1)!0.5!(#3)$);
  \coordinate (m23)    at ($(#2)!0.5!(#3)$);
  \draw[fill=pink] (center) -- (m12) -- (#1) -- (m13) -- cycle;
  \draw[fill=red] (center) -- (m12) -- (#2) -- (m23) -- cycle;
  \draw[fill=purple] (center) -- (m13) -- (#3) -- (m23) -- cycle;
  \draw[thick,fill=black,fill opacity=#4]  (#1) -- (#2)  -- (#3) -- cycle;
}

%\tdplotsetmaincoords{70}{165}

\numberwithin{equation}{section}

\newcommand\Z{{\mathbb Z}}

\newcommand{\TT}{\mathcal{T}}

\newcommand{\C}{{\mathbb C}}

\newcommand{\pslc}{{\mathrm{PSL}(2,\mathbb{C})}}
\newcommand{\pslr}{{\mathrm{PSL}(2,\,\mathbb{R})}}
\newcommand{\pglr}{{\mathrm{PGL}(2,\,\mathbb{R})}}

\newcommand{\rs}{\mathbb{C}\mathrm{\mathbf{P}}^1}

\makeatletter
\def\namedlabel#1#2{\begingroup
   \def\@currentlabel{#2}%
   \label{#1}\endgroup
}
\makeatother

\theoremstyle{plain}                    
\newtheorem{thm}{Theorem}[section]
\newtheorem*{khthm}{Killing-Hopf's Theorem}
\newtheorem*{pthm}{Poincar\'e's Theorem}
\newtheorem*{ethm}{Euler's observation}

\newtheorem{lem}[thm]{Lemma}
\newtheorem{prop}[thm]{Proposition}
\newtheorem{cor}[thm]{Corollary}

\theoremstyle{definition}
\newtheorem{defn}[thm]{Definition}

\newtheorem{ex}[thm]{Example}

\newtheorem{rmk}[thm]{Remark}

%\theoremstyle{remark}

% big chi

%\usepackage[utf8]{inputenc}

\title[Tessellations of surfaces]{Tessellations and triangulation of surfaces}

\author{Gianluca Faraco}

\address{Dipartimento di Matematica e Applicazioni U5, Universita` degli Studi di Milano-Bicocca, Via Cozzi 55, 20125 Milano, Italy}
\email{gianluca.faraco@unimib.it}
\email{gianluca.faraco.math@gmail.com}
\date{\today}

\begin{document}

\keywords{}
\subjclass[]{}%
%\date{\today}
\dedicatory{}

\begin{abstract}
    A tessellation or tiling is a collection of sets, called tiles, that cover a plane without gaps and overlaps. The present note is an invitation to get to know the beauty and majesty of tessellations and triangulation of orientable surfaces. 
\end{abstract}

\maketitle
\tableofcontents

\section{Introduction}

\noindent In the present survey, we aim to introduce the reader to the topic of \textit{tessellations}. In the real life, we come across examples of tessellations every day: a tiled floor with square tiles is certainly the most typical example, although not the only one. \textit{Tessellate} found its origin from the Latin word "\textit{tessellatus}", past participle of "\textit{tessellare}", which literally means to pave with "\textit{tessellae}", plural of the Latin word "\textit{tessella}", which means small square. Tessella corresponds to the modern term \textit{tile} and a tessellation is commonly known also as \textit{tiling}, \textit{paving} or \textit{mosaic}, see \cite{GS}. Despite its meaning, a tile is not necessarily a square but it can be any polygon, possibly non-regular. More generally, a tile could even be somewhat fancy, not necessarily a polygon.

\begin{figure}[!ht]
  \centering
  \includegraphics[scale=0.0625]{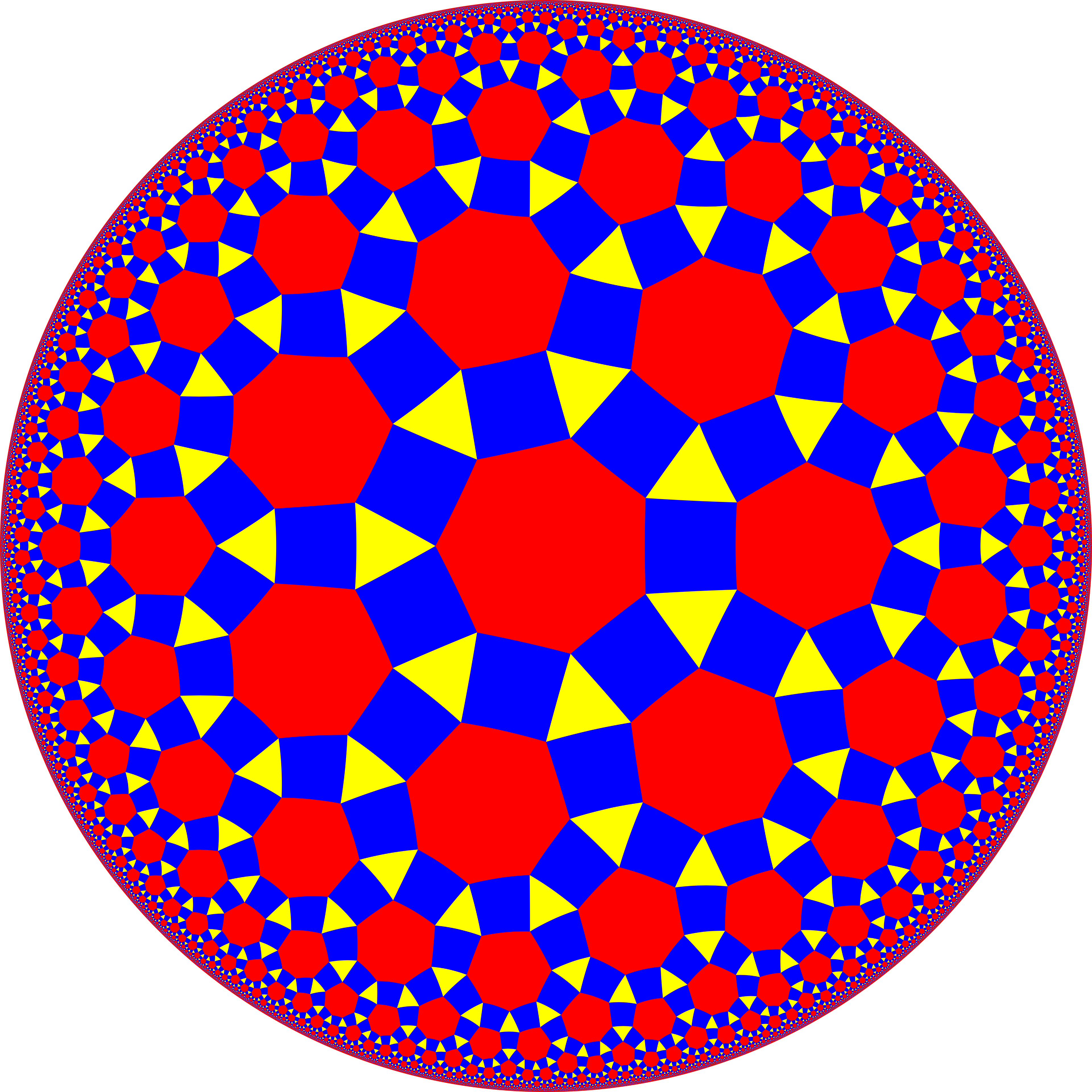}
  \caption{Rhombitrihexagonal tessellation of the Poincar\'e disc.}
  \label{rhomb}
\end{figure}

\noindent Although tessellations have been known since the ancient times, \textit{e.g.} Sumerians used them in building wall decorations formed by patterns of clay tiles, the first pioneering investigation had been done by Johannes Kepler around the seventeenth centenary. In his \textit{Harmonices Mundi}, Kepler studied regular and semi-regular tessellations and he was the first to describe the structure of the honeycomb, which turns out to be one of the three possible tessellations of the Euclidean plane. 

\smallskip

\noindent The beauty and majesty of tessellations, however, can be fully appreciated in the classical art in which they have been extensively used for mosaics or paving floors. An example of such paving can be found at the Archaeological Museum of Seville in Spain where the floor is tiled by using square, triangle and hexagon prototiles. Figure \eqref{rhomb} above gives an idea of what such paving, called \textit{rhombitrihexagonal tiling}, looks like in the case of the Poincar\'e disk. Here we will focus on a mathematical exposition of tessellations of the sphere, the Euclidean plane and hyperbolic plane. We begin with the following question:

\subsection{What is a tessellation?} By a tessellation $\mathcal{T}$, we mean a subdivision of a geometric surface $S$ into non-overlapping polygons, \textit{i.e.} interiors are disjoint, called \textit{tiles}. By a \textit{geometric surface}, we mean a topological surface equipped with one of three $2-$dimensional geometries: spherical, euclidean or hyperbolic. A generic tessellation $\mathcal{T}$ does not need to exhibit any periodic pattern and any such tessellation is commonly known as \textit{aperiodic}. Penrose's tilings are the best-known examples of aperiodic tilings, see \cite{PR}. These tilings are tiled by using two different prototiles and it was a longstanding question whether aperiodic tilings tiled by a single prototile exist. These are sometimes called aperiodic monotiles. A positive answer has been recently given in \cite{SMKGS} for topological disk tiles where the authors exhibit a continuum of combinatorially equivalent aperiodic polygons. A symmetry for a tiling $\TT$ is an isometry of $S$ that preserves the tiling structure \textit{i.e.} it takes vertices, edges and tiles of the tiling to themselves. A tessellation $\mathcal{T}$ is said to be \textit{symmetric} if any tile can be mapped onto any other one by an isometry which maps the whole $\mathcal{T}$ to itself. The basic idea is that tiles of a symmetric tiling, say $\mathcal{T}$, cannot be distinguished and $\mathcal{T}$ looks like just the same from the point of view of any tile. For a symmetric tiling all tiles are pairwise congruent and hence there is only one prototile. A symmetric tiling is called \textit{regular} is the prototile is a regular polygon.

\begin{figure}[!ht]
  \centering
  \includegraphics[scale=0.125]{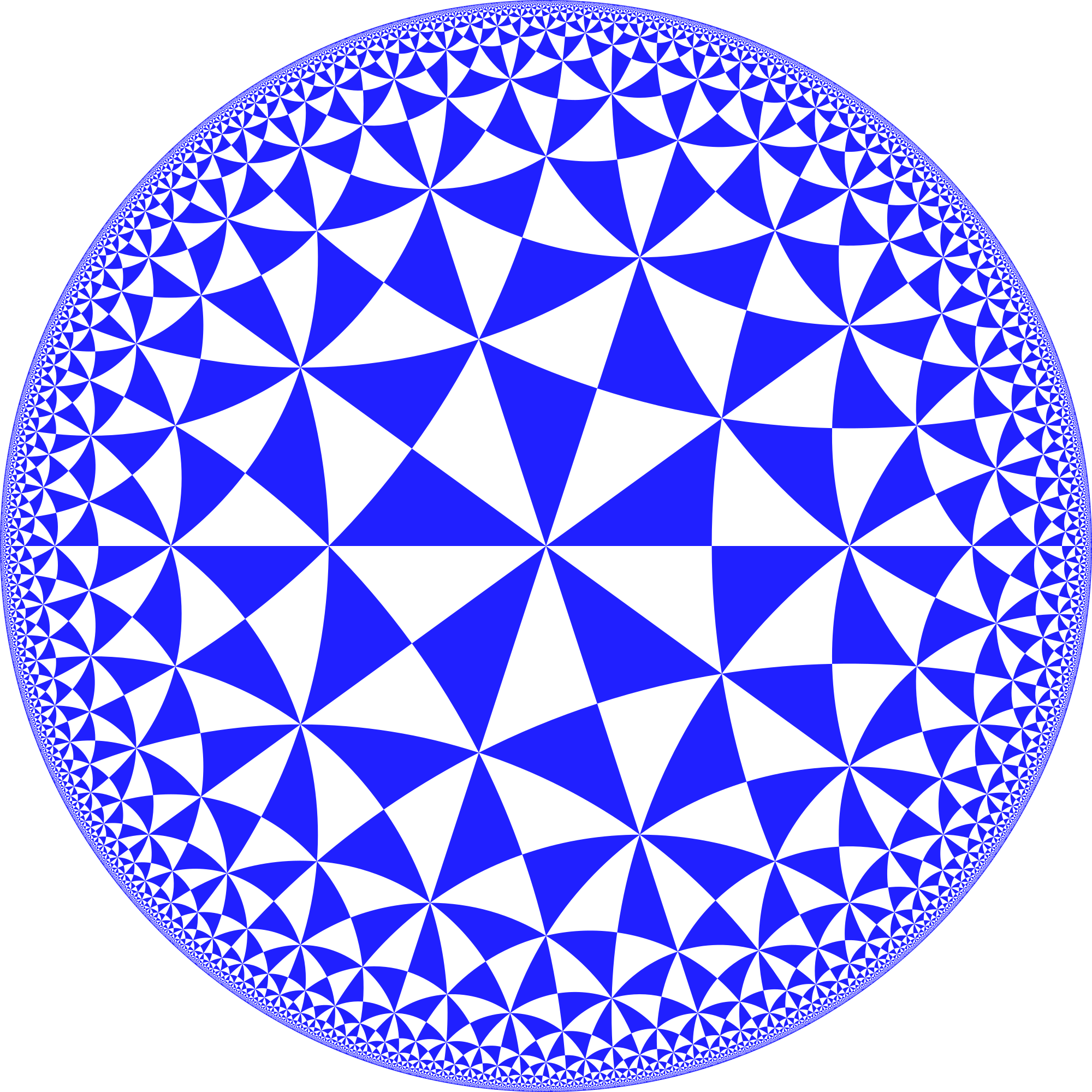}
  \caption{Triangular tessellation of the Poincar\'e disk where each tile is isometric to a hyperbolic triangle having inner angles of magnitude $\Big(\frac{\pi}{2},\,\frac{\pi}{4},\,\frac{\pi}{5}\Big)$. In Section \S\ref{tg2}, we shall see that this tessellation corresponds to the Triangle group $\Delta(2,4,5)$.}
  \label{tg542}
\end{figure}

\noindent Figure \eqref{tg542} depicts an example of regular tilings of the Poincar\'e disk $\mathbb D^2$; we shall discuss, in details, these kind of tessellation in Section \S\ref{tg}. The tessellation shown in Figure \eqref{rhomb} is not regular because there are three different kinds of prototiles. In the present survey, we will mainly focus on symmetric tessellations with short digressions about semi-regular tessellations in \S\ref{srt} and \S\ref{ssec:semiplatsol}. We now focus on another aspect of tessellations, namely the edge-to-edge property. A tiling $\mathcal{T}$ is said to be \textit{edge-to-edge} if the mutual relation of any pair of tiles is one of the following:
\begin{itemize}
    \item they are disjoint (have no point in common);
    \item they have precisely one common point which is a vertex of each of the polygons; or
    \item they share a segment that is an edge of each of the two polygons.
\end{itemize}

\noindent For edge-to-edge tessellations, any point of the plane which is a vertex of some tile is also the vertex of every other tile to which it belongs. In the present note, we will focus on edge-to-edge tilings. The curious reader may consult \cite[Section \S4]{GS} for tilings that are not edge-to-edge. The most familiar edge-to-edge regular tessellations are those of $\mathbb E^2$, the Euclidean plane. In order to give to the reader some concrete examples to think about, we shall describe them in more detail in the forthcoming subsection \S\ref{step}.

\subsection{Symmetric tessellations of the Euclidean plane}\label{step} Before Theaetetus of Athens (c. 417 – c. 369 BC) classified the Platonic solids, we shall introduce them in Section \S\ref{platsol}, the possible edge-to-edge tessellations of the Euclidean plane were already discovered. There are only three regular tessellations of the Euclidean plane. In other words, there are only three regular polygons which, on their own right, can tile the whole Euclidean plane like a mosaic. With a heuristic reasoning, which we will make more accurate in the next section \S\ref{poinpol}, in this paragraph we aim to answer the following question.

\subsubsection{How many regular tessellations?}\label{howmany} We begin by seeking conditions for a regular polygon to tile the Euclidean plane. The first thing we may observe is that any vertex is always shared by at least three tiles. Moreover, the angles at any vertex sum up to $2\pi$. This simple fact immediately rules out all regular polygons with at least $7$ edges because the inner angles are all congruent and greater than $\frac{2\pi}{3}$: Notice that $3$ tiles with seven or more edges that share a vertex necessarily overlap. Only four polygons remain available as possible prototiles; namely the triangle, the square, the pentagon and the hexagon. Among these polygons, the only one to be excluded is the pentagon because its inner angles have magnitude $\frac{3\pi}{5}$. Three tiles with one vertex in common do not suffice to close up the figure; on the other hand four pentagons overlap. The remaining regular polygons, namely the triangle, the square and the hexagon have inner angles equal to $\frac{\pi}{3}$, $\frac{\pi}{2}$ and $\frac{2\pi}{3}$ respectively. We show that they can tile the whole Euclidean plane. Given a regular (equilateral) triangle $T$ in $\mathbb E^2$, we can place at any vertex of $T$ five regular triangles all isometric to $T$ itself. By repeating this infinitely many times, we cover the whole Euclidean plane. Notice that, since the triangles (tiles) are all isometric the tiling satisfy the edge-to-edge property and it is symmetric. We can also play the same game with the square and the regular hexagon and, in both cases, end up with a tiling of $\mathbb E^2$. The latter, in particular, provide the \textit{honeycomb} tessellation that we have already alluded to above. We can therefore affirm that there are three edge-to-edge regular tessellations of the Euclidean plane. By removing the edge-to-edge property, however, it turns out that there are seven more regular tessellations of $\mathbb E^2$, see \cite[Section \S4]{GS}. 

\subsubsection{Duality} There is a notion of duality for tessellations. Consider the tiling of $\mathbb E^2$ made by regular (equilateral) triangles. At any vertex, there are six triangular faces. By joining the centers of adjacent faces, \textit{i.e.} faces with one side in common, we obtain a regular tessellation of the Euclidean plane made entirely of regular hexagons. The opposite construction is also possible. We can start with the tessellation of $\mathbb E^2$ made by regular hexagon and then obtain a tessellation entirely made of regular triangles by joining the centers of adjacent hexagonal faces. Therefore we can go from a triangular to a hexagonal tessellation and vice versa: they are therefore \textit{dual}. The remaining tiling made of squares is \textit{self-dual}. In fact, by joining the centers of adjacent faces, a square tiling determines a new tiling whose prototile is a square isometric to any tile of the tessellation we have begun with. Duality plays an important role in the theory. In fact, dual tessellations have the same intrinsic nature despite the tessellations themselves may appear different. We shall describe a similar notion of duality for platonic solids in Section \S\ref{platsol}.

\subsection{Semi-regular tilings and beyond}\label{srt} In this paragraph we briefly discuss tilings with two or more prototiles. A tessellation is called \textit{semi-regular} if the prototiles are all regular polygons, possibly of different types, and any vertex has the same valence and configuration, see \cite[Vertex type]{DG}. Notice that regular tessellations are special cases of semi-regular tessellations. Semi-regular tilings of the Euclidean plane has been described by Kepler in his \textit{Harmonices Mundi}. He showed that there are exactly $11$ semi-regular tilings of $\mathbb E^2$; three of which are the regular tessellations described above in \S\ref{step}. The remaining $8$ tilings can be subdivided according to the number of prototiles:
\begin{itemize}
    \item six semi-regular tessellations have two prototiles: For five out of six of these tilings, a prototile is always a triangle that can be combined with a square prototile in two different ways, a hexagonal prototile in two different ways on a dodecagonal prototile. Finally, for the sixth tiling, the prototiles are a squares and an octagon.
    \item two semi-regular tessellations have three prototiles. The prototiles are a triangle, a square and a hexagon in one case and, for the second case, a square, a hexagon and a dodecagon.
\end{itemize}
\noindent It is worth mentioning that if we drop the assumption to have regular prototiles, then there are infinitely many tessellations of $\mathbb E^2$. For instance, any triangular or quadrilateral in itself can be a prototile for paving the Euclidean plane. We can even find pentagonal (non-regular) prototiles, \textit{e.g. Cairo pentagonal tiling}, named for its use as a paving design in Cairo (Egypt), see Figure \eqref{fig:cpt}.

\begin{figure}[!ht]
  \centering
  \includegraphics[scale=0.2,valign=t]{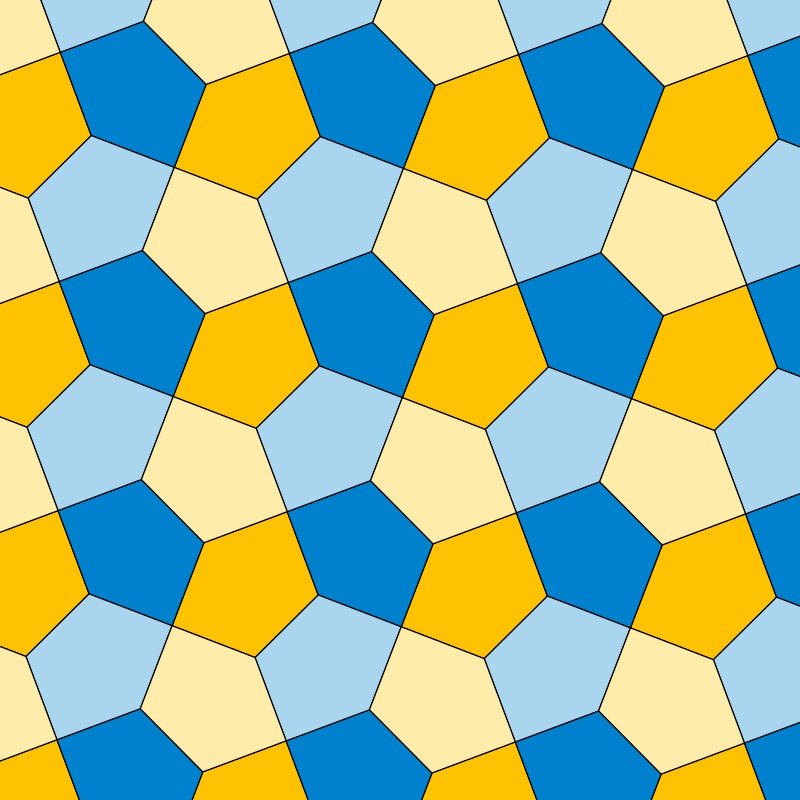}
  \caption{Cairo pentagonal tiling. Notice that this is not a regular tiling but it is a symmetric tiling of $\mathbb E^2$. \label{fig:cpt}}
\end{figure}

\smallskip

\noindent As already alluded to above, we shall introduce in Section \S\ref{platsol} Platonic solids. They will play an important role in the theory of tessellations because several regular tilings of the sphere $\mathbb S^2$ arise from them. In the same fashion, it is possible to show that semi-regular Platonic solids yield semi-regular tilings of $\mathbb S^2$, for an example see Figure \eqref{srhomb}. Finally, we shall see in \S\ref{tg} that any $n-$polygon can be a prototile for a regular tessellation of the hyperbolic plane. Although these have been known since many years, semi-regular tilings of the hyperbolic plane have been very recently studied by Datta-Gupta in \cite{DG}. Figure \eqref{rhomb} provides an example of semi-regular tiling of the hyperbolic plane.

\subsection{Going deeper} According to \cite{GS}, the mathematical theory of tessellations is pretty elementary in itself but rich of interesting facts and connections with other areas of mathematics. Regular and semi-regular tessellations in the Euclidean plane provide an excellent playground to begin with. We have introduced them above in a heuristic way, based on a logical reasoning, by observing which regular polygons can be used as prototiles. For spherical and hyperbolic tessellations the theory becomes more interesting as well as more challenging. In particular, it is no longer easy to find out which polygons can pave the sphere or the hyperbolic plane. Roughly speaking, this is due to the fact that both spherical and hyperbolic geometry are not locally isometric to the Euclidean geometry. Our heuristic approach in \S\ref{howmany} strongly relies on a classical fact: The sum of the inner angles of a Euclidean $n$-polygon is always equal to $(n-2)\pi$ and thence two regular $n$-polygons in $\mathbb E^2$ are always similar. These properties miserably fail for non-Euclidean geometries. Two regular $n$-polygons, both in spherical and hyperbolic geometry, may not be similar and the inner angles may sum up to different amounts. Therefore, for any $n$, there may be several, possibly infinitely many, pairwise not similar regular $n$-polygons. A priori, there may very well be a regular $n$-prototile for every $n$; that is a regular $n$-polygon that tiles the sphere or the hyperbolic plane. As we shall see in \S\ref{platsol} this will not be the case for the spherical tessellations but it will be for tilings of the hyperbolic plane.

\subsection{Structure of the paper} In Section \S\ref{poinpol}, we shall provide conditions for a polygon to be a prototile in the corresponding geometry. Section \S\ref{tg} is the central core of this Section. We shall introduce \textit{triangle groups} as the groups of symmetries of pavings made of triangular tiles, see for instance Figure \eqref{tg542}. As we shall see, any regular tessellation can be refined by dividing each tile into sub-tiles. By joining the center of any tile to its vertices and to the midpoint of any edge we obtain a new paving where the prototile is a triangle. It is an easy matter to check that the group of symmetries of any tessellation $\mathcal{T}$ is nothing but the group of symmetries of its triangular refinement. It is interesting to observe how different tessellation have the same group of symmetries. This is, for instance, the case of the triangular and hexagonal tessellation of the Euclidean plane described in \S\ref{howmany}. It is no coincidence that these two tilings are the dual of each other. In Section \S\ref{platsol}, we shall describe regular tessellation of the sphere $\mathbb S^2$. Interestingly, they almost all arise from Platonic solid. %Finally, in Section \S\ref{trs}, we shall describe under which conditions a Riemann surface of positive genus admits a regular tessellation.

\subsection*{Acknowledgments}
%I would like to thank Prakruti Kalsaria for a careful reading of a preliminary draft of this work. Her comments and suggestions helped me to improve the present manuscript. 
\smallskip

\noindent I am grateful to Prakruti Kalsaria for careful reading and comments about the present work. \textit{Picture credits:} Most of the pictures has been completely borrowed from the web. Wikipedia has a very well-stocked catalog of hyperbolic plane tessellations corresponding to different triangle groups. All of these images are release into the public domain. Figures \eqref{rhomb}, \eqref{tg542} are attributed to by Parcly Taxel. Figure \eqref{fig:cpt} is attributed to David Eppstein. Figures \eqref{tg2453} and \eqref{tginf} are attributed to Tamfang. Figure \eqref{srhomb} is attributed to Tomruen.

\section{Poincar\'e's Theorem for compact polygons}\label{poinpol}

\noindent A surface is simply a topological manifold of dimension two. Let us start with the following general leading question that motivates the study of regular tessellations on surfaces.
\begin{quote}
    \textit{Given a polygon $\mathcal{P}$ and a surface $S$, does there exist a division of $S$ into non-overlapping congruent tiles each one congruent to $\mathcal{P}$?}
\end{quote}

\noindent In order to talk about notions as "polygons" and "congruence", one needs in the first place to endow a topological surface with a Riemannian metric $ds^2$ so that notions like, distance, angles and area make sense. We shall refer to these surfaces as \textit{geometric surfaces}. We begin with a short recall of geometry of surfaces. For an excellent account about geometry of surfaces the reader is invited to consult \cite{SJ}. %In what follows we shall mainly consider simply connected surfaces and we focus on compact surfaces in Section \S\ref{trs}.

\smallskip

\subsection{Geometry of simply connected surfaces}\label{ssec:geosimsur} Topologically, there are only two simply connected surfaces distinguished by compactness: The sphere which is compact and the plane which is not. In classical geometry the sphere is viewed as a figure in three-dimensional euclidean space analogous to the circle in the euclidean plane. The great interest for the sphere arises from the fact that it is not locally isometric to the plane. In particular, its intrinsic structure makes the sphere the first example of a non-Euclidean geometry. 

\smallskip

\begin{itemize}
    \item[] \textbf{Spherical geometry.} By adopting an extrinsic point of view, the $2-$dimensional sphere, usually denoted by $\mathbb S^2$, can be seen as the following locus 
\begin{equation}
    \mathbb S^2=\left\{\,(x,y,z)\in\mathbb R^3\,|\, x^2+y^2+z^2-1=0\,\right\}\subset \mathbb R^3.
\end{equation}

\noindent By using the polar coordinates $x=\sin\theta\cos\phi$, $y=\sin\theta\sin\phi$ and $z=\cos\theta$, the usual round metric on $\mathbb S^2$ can be explicitly written as $ds^2=d\theta^2\,+\,\sin^2\theta\, d\phi^2$. The group of isometries of $\mathbb S^2$, equipped with the round metric, is denoted by $\text{Iso}(\mathbb S^2)$, identifies with $\text{O}(3,\mathbb R)$. This extrinsic approach also allows to define the \textit{straight lines} for this geometry in a very simple way. Straight lines are given by great circles, \textit{i.e.} circles arising from the intersection of $\mathbb S^2$ with a $2-$dimensional plane passing through the origin in $\mathbb R^3$. 
\end{itemize}

\smallskip

\noindent A plane can be seen as a topological space homeomorphic to $\mathbb R^2$. Despite the sphere admits a unique conformal class of Riemannian metrics up to diffeomorphisms, see Killing-Hopf's Theorem below, a plane actually admits two non-equivalent conformal classes of Riemannian metrics up to diffeomorphisms. In our language, a plane admits two non-isometric geometries. 

\smallskip

\begin{itemize}
    \item[] \textbf{Euclidean geometry.} One of these is the well-known Euclidean geometry determined by the standard Riemannian metric $ds^2=dx^2+dy^2$. The Euclidean plane, denoted by $\mathbb E^2$, is defined as $\mathbb R^2$ equipped with the standard Euclidean metric and its group of isometries naturally identifies with $\text{Iso}(\mathbb E^2)\cong \textnormal{O}(2,\,\mathbb R) \ltimes \mathbb R^2$.
    
    \smallskip
    
    \item[] \textbf{Hyperbolic geometry.} For the other geometry, we restrict to $\mathbb H^2=\big\{(x,y)\in\mathbb R^2\,|\, y>0\big\}$ known as \textit{upper half-plane}. To find out an explicit homeomorphism between $\mathbb R^2$ and $\mathbb H^2$ is a routine exercise. The upper half-plane $\mathbb H^2$ equipped with the Riemannian metric $ds^2=y^{-2}(dx^2+dy^2)$ is a \textit{model} of the hyperbolic plane. We say "model" because there are several models of hyperbolic geometry in use and they are all equally entitled as hyperbolic plane. For this model, the group of isometries identifies with $\pglr$, with the action given by M\"obius transformation as follows
\begin{equation}
     \pglr\,\times\,\mathbb H^2\longrightarrow \mathbb H^2, \quad \begin{pmatrix} a & b\\ c & d \end{pmatrix},\, z\longmapsto \frac{az+b}{cz+d}.
\end{equation}

\noindent In this model, straight line for the hyperbolic metric are given by half-lines and half-circles orthogonal to $\partial\mathbb H^2=\{(x,y)\in\mathbb R^2\,|\, y=0\}$.

\smallskip

\noindent Another model which is worth to mention is the so-called \textit{Poincar\'e disk} $\mathbb D^2$. This is defined as $\mathbb D^2=\{\,z\in\C\,:\, |z|<1\,\}$ equipped with the Riemannian metric $ds^2=4\big( 1\,+\,|z|^2\big)^{-2}|dz|^2$. Of course, this model is equivalent to that of upper half-plane. Observe that $\mathbb H^2$ and $\mathbb D^2$ are both proper subset of $\rs$ -- the Riemann sphere -- and an explicit isometry between these two models is given by the mapping $j\in\pslc\cong\textnormal{Aut}(\rs)$ defined as
\[ j:\mathbb H^2\longrightarrow \mathbb D^2,\,\,\,j(z)=\frac{i\,z+1}{z-1}.
\]

\noindent For this second model, straight line for the hyperbolic metric are given by diameters and half-circles orthogonal to $\mathbb S^1=\partial \mathbb D^2$. It is straightforward to check that the isometry group of $\mathbb D^2$ is given by $j\,\pglr\, j^{-1}$. Throughout the present survey, we will use both models based on our convenience. 
\end{itemize}

\begin{rmk}
Historically speaking, the hyperbolic plane was the result of the search for a non-euclidean plane-a surface with unbounded straight lines and, for each line $\ell$ and point $P\notin\ell$, there are more than one line through $P$ which does not meet $\ell$.
\end{rmk}

\noindent Euclidean geometry, spherical geometry and hyperbolic geometry are distinguished by their \textit{scalar curvature}, a key notion in Riemannian geometry. For surfaces, the scalar curvature is exactly twice the Gaussian curvature which is an \textit{intrinsic} invariant of a surface endowed with a Riemannian metric. The sphere endowed with its round metric has scalar curvature $+1$, the Euclidean plane $\mathbb E^2$ has scalar curvature $0$ whereas any model of hyperbolic plane has scalar curvature $-1$. The so-called \textit{Theorema Egregium} by Gauss states that the scalar curvature is invariant under local isometries and hence we deduce that these three geometries are pair-wise non-isometric. 

\smallskip

\noindent In anyone of these three geometries, any pair of distinct points can be joined by a path of minimal length usually called segments or \textit{geodesics}. In these geometries, it turns out that any line segment can be continued indefinitely. This key property, not guaranteed by a generic Riemannian manifold, makes the Euclidean geometry, spherical geometry and hyperbolic geometry \textit{complete}. By Hopf-Rinow's Theorem, this is equivalent to say that any closed and bounded subset of $S$ is compact. An example of \textit{incomplete} Riemannian metric on a simply connected surface is the following: Consider the sphere as $\mathbb S^2=\mathbb{R}^2\,\cup\{\infty\}$, that is the one-point compactification of $\mathbb S^2$. The round metric on $\mathbb S^2$ restricts to a Riemannian metric on $\mathbb R^2$ which is now incomplete. This shows, in particular, that if we drop the completeness from our assumptions, then $\mathbb R^2$ carries other geometries. 

\smallskip

\noindent The following Theorem states that Euclidean geometry, spherical geometry and hyperbolic geometry are the only possible complete geometries in dimension two, with constant scalar curvature, up to possible rescaling of the metric.

\begin{khthm}\namedlabel{thm:khthm}{Killing-Hopf's Theorem}
A simply connected surface equipped with a complete Riemannian metric with constant scalar curvature is isometric to either the $2$-sphere with its round metric; the plane with its Euclidean metric; or the upper half-plane with its hyperbolic metric.
\end{khthm}

\smallskip

\noindent In what follows, with \textit{polygon} we shall mean a bounded region in $\mathbb E^2,\,\mathbb S^2$ or $\mathbb H^2$, along with its bounding circuit, that is described by a finite number of straight line segments connected to form a closed polygonal chain. In reference to tessellations, polygons are often called \textit{tiles} and we also adopt this convention. For positive integers $p,\,q$, with $q$-regular $p-$gon we shall mean a regular polygon with $p$ sides such that every inner angle has magnitude $\frac{2\pi}{q}$. We have already introduced the notion of tessellation in the introduction in full generality. In the next section we provide a group-theoretic incarnation of tessellations.

\subsection{From symmetric tessellations to groups} Let $S$ be anyone of $\mathbb E^2,\,\mathbb S^2$ or $\mathbb H^2$. Let $\mathcal T$ be a symmetric tessellation, that is a tessellation of $S$ such that for any pair of tiles $t$ and $\tau$ there is $g\in G=\text{Iso}(\,S)$ such that $g\cdot t=\tau$ and $g$ maps the whole $\mathcal{T}$ to itself. For a tessellation $\mathcal T$, we denote by $\text{Sym}(\mathcal T)$ its group of symmetries. In principle, given two tiles $t$ and $\tau$ there may be more than one symmetry that maps the first tile onto the second. Suppose $g,\,h\in\text{Iso}(\,S)$ are two isometries such that $g\cdot t=\tau$ and $h\cdot t=\tau$. Since each symmetry is of course a reversible transformation, in the sense that $t=g^{-1}\cdot \tau$, it follows that $g^{-1}h\cdot t=t$, that means $g^{-1}h$ fixes $t$. For any tile $t$ we denote by $\text{Stab}_{\,\mathcal T}(\,t\,)$ the \textit{stabiliser} of $t$; \textit{i.e.} the subgroup of $\text{Sym}(\mathcal T)$ of those symmetries that fix $t$. Geometrically speaking, $\text{Stab}_{\,\mathcal T}(\,t\,)$ can be regarded as the group of symmetries of $t$. Since $\mathcal{T}$ is assumed to be symmetric, any pair of tiles are congruent and hence they all have the same group of symmetries. On the other hand, as two tiles are located in different places on $S$, their respective stabilisers are not the same group but isomorphic.

\begin{lem}
Let $\mathcal T$ be a symmetric tessellation of a geometric surface $S$ and let $t$ and $\tau$ be two tiles. Then 
\begin{equation}
    \textnormal{Stab}_{\,\mathcal T}(\,t\,)\cong \textnormal{Stab}_{\,\mathcal T}(\,\tau\,).
\end{equation}
\end{lem}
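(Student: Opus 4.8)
The plan is to exploit the transitivity of $\textnormal{Sym}(\mathcal T)$ on tiles, which is exactly the hypothesis that $\mathcal T$ is symmetric, together with the standard fact that stabilisers of points in the same orbit are conjugate subgroups. Concretely, since $\mathcal T$ is symmetric there exists $g\in\textnormal{Sym}(\mathcal T)\subseteq\textnormal{Iso}(S)$ with $g\cdot t=\tau$. I will show that conjugation by $g$ carries $\textnormal{Stab}_{\,\mathcal T}(\,t\,)$ isomorphically onto $\textnormal{Stab}_{\,\mathcal T}(\,\tau\,)$.

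First I would fix such a $g$ and define the map $\varphi\colon \textnormal{Stab}_{\,\mathcal T}(\,t\,)\longrightarrow \textnormal{Sym}(\mathcal T)$ by $\varphi(h)=g\,h\,g^{-1}$. Since $\textnormal{Sym}(\mathcal T)$ is a group (it is the set of isometries of $S$ preserving the tiling, visibly closed under composition and inversion) and $g,h\in\textnormal{Sym}(\mathcal T)$, the element $g\,h\,g^{-1}$ again lies in $\textnormal{Sym}(\mathcal T)$. Next I would check that $\varphi(h)$ fixes $\tau$: indeed $(g\,h\,g^{-1})\cdot\tau=g\,h\,g^{-1}\cdot(g\cdot t)=g\,h\cdot t=g\cdot t=\tau$, using $h\cdot t=t$. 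Hence $\varphi$ takes values in $\textnormal{Stab}_{\,\mathcal T}(\,\tau\,)$. It is a group homomorphism because conjugation is: $\varphi(h_1h_2)=g\,h_1h_2\,g^{-1}=(g\,h_1\,g^{-1})(g\,h_2\,g^{-1})=\varphi(h_1)\varphi(h_2)$.

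Then I would produce an inverse. The symmetric hypothesis applied to the pair $(\tau,t)$ gives a symmetry sending $\tau$ to $t$; one cheap choice is $g^{-1}$ itself, since $g^{-1}\cdot\tau=g^{-1}\cdot(g\cdot t)=t$. Define $\psi\colon \textnormal{Stab}_{\,\mathcal T}(\,\tau\,)\longrightarrow \textnormal{Stab}_{\,\mathcal T}(\,t\,)$ by $\psi(k)=g^{-1}\,k\,g$; the same computation as above, with the roles of $t$ and $\tau$ swapped, shows $\psi$ is a well-defined homomorphism into $\textnormal{Stab}_{\,\mathcal T}(\,t\,)$. Finally $\psi\circ\varphi$ and $\varphi\circ\psi$ are the identities because $g^{-1}(g\,h\,g^{-1})g=h$ and $g(g^{-1}k\,g)g^{-1}=k$. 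Therefore $\varphi$ is an isomorphism and $\textnormal{Stab}_{\,\mathcal T}(\,t\,)\cong\textnormal{Stab}_{\,\mathcal T}(\,\tau\,)$.

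There is no real obstacle here; the statement is an instance of the orbit–stabiliser philosophy and the only things to be careful about are bookkeeping: that $\textnormal{Sym}(\mathcal T)$ is genuinely a group so that conjugates stay inside it, and that an element fixing the tile $t$ as a set (not merely pointwise) is what is meant by $\textnormal{Stab}_{\,\mathcal T}(\,t\,)$, so that the computation $(g\,h\,g^{-1})\cdot\tau=\tau$ is a set-theoretic equality of tiles. If one wanted to avoid choosing $g$ at all, one could instead remark that $\textnormal{Sym}(\mathcal T)$ acts on the set of tiles, that this action is transitive by the symmetry assumption, and invoke the general group-theoretic fact that point stabilisers of a transitive action are mutually conjugate, hence isomorphic; but I would prefer to spell out the explicit conjugating isometry as above, since it is elementary and self-contained.
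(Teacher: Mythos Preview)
Your argument is correct and follows exactly the same approach as the paper: pick $g\in\textnormal{Sym}(\mathcal T)$ with $g\cdot t=\tau$ and observe that conjugation by $g$ sends $\textnormal{Stab}_{\,\mathcal T}(t)$ to $\textnormal{Stab}_{\,\mathcal T}(\tau)$. The paper's proof is terser, simply noting that $h\in\textnormal{Stab}_{\,\mathcal T}(t)$ implies $ghg^{-1}\in\textnormal{Stab}_{\,\mathcal T}(\tau)$ and declaring the isomorphism a direct consequence, whereas you spell out the homomorphism property and the explicit inverse $\psi$; but the idea is identical.
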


\begin{proof}
Let $g\in\textnormal{Sym}(\mathcal T)$ such that $g\cdot t=\tau$. Let $h\in\textnormal{Stab}_{\mathcal T}(\,t\,)$, then $ghg^{-1}\in \textnormal{Stab}_{\mathcal T}(\,\tau\,)$. As a direct consequence it follows that $\textnormal{Stab}_{\,\mathcal T}(\,t\,)\cong \textnormal{Stab}_{\,\mathcal T}(\,\tau\,)$ as desired.
\end{proof}

\smallskip

\noindent Although $\textnormal{Sym}(\mathcal T)$ may be an infinite group, the stabiliser $\text{Stab}_{\,\mathcal T}(\,t\,)$ of any tile $t$ is always finite. Let $n\ge1$ be its cardinality -- observe the cardinality does not depend on $t$. There is a finite subdivision of $t$ into finitely many \textit{subtiles}, say $s_1,\dots,s_n$ such that for any $s_i$ and $s_j$ there exists a unique symmetry $g_{ij}\in\text{Stab}_{\,\mathcal T}(\,t\,)$ that relates them, that is $s_i=g_{ij}\cdot s_j$. For any other tile $\tau=h\cdot t$ there is a well-defined subdivision $\sigma_1,\dots,\sigma_n$, where $\sigma_i=h\cdot s_i$, such that for any $\sigma_i$ and $\sigma_j$ there exists a unique symmetry $h\,g_{ij}\,h^{-1}\in\text{Stab}_{\,\mathcal T}(\,\tau\,)$ that relates them. Therefore, a symmetric tessellation $\mathcal T$ always admits a well-defined subdivided tessellation into subtiles. Since the subtiles have been chosen within an original tile so that exactly one $g\in\textnormal{Sym}(\mathcal T)$ maps a given one of them onto another, the same holds for any two subtiles in the subdivided tessellation. 

\smallskip

\noindent Such a subdivided tessellation gives a better picture of the symmetry group $\textnormal{Sym}(\mathcal T)$. In fact, any tile of the subdivided tessellation may be regarded as a \textit{fundamental tile} -- this terminology is far from being casual as we shall see -- because it includes a representative from each $\textnormal{Sym}(\mathcal T)$-orbit, and each $\textnormal{Sym}(\mathcal T)$-orbit is represented at most once in the interior of the fundamental tile. Once a fundamental tile is fixed, say $t_o$, there is a natural identification between a geometric object like a tessellation of a geometric surface $S$ and a subgroup of its isometry $\textnormal{Iso}^+(\,S)$ as follows
\begin{equation}\label{eq:tesstosym}
    \mathcal T \,\longrightarrow \,\textnormal{Sym}(\mathcal T)
\end{equation}
that maps $t$ to the unique element $g\in\textnormal{Sym}(\mathcal T)$ such that $g\cdot t_o=t$.

\begin{ex}
The triangular tiling has the same symmetry group of the tiling made of regular hexagon. In fact: if we refine the tessellation of the Euclidean plane made of regular triangles by considering the barycentric subdivision of each face we will get a new regular tessellation, say $\mathcal{T}$, made of right triangles with angles $\frac{\pi}{2},\, \frac{\pi}{3}$ and $\frac{\pi}{6}$. This latter, on the other hand, is also obtained if we refine the tessellation made of regular hexagons. The group of symmetries of  $\mathcal{T}$ is denoted by $\Delta(2,3,6)$. It is not hard to show that any isometry $\Delta(2,3,6)$ preserving $\mathcal{T}$ also preserves the regular tessellation made of triangles as well as the regular tessellation made of hexagons.
\end{ex}

\begin{rmk}\label{rem:color}
For symmetric tessellations, a useful enhancement of these tessellations is obtained by coloring the subtitles alternately black and white. Pictorially, the whole tessellation looks like as a possibly infinite chessboard where the tiles are no longer square but a generic polygon. In each case, the coloring is emphasised on one copy of the original tile. The symmetries of the tessellation which preserve the coloring are precisely the orientation-preserving symmetries whereas the symmetries that switch the coloring correspond to orientation-reversing symmetries. 
\end{rmk} 

\smallskip

\noindent For a tessellation $\mathcal T$, we have determined a subdivided tessellation so that each pair of tiles of this latter one are related by a unique symmetry of $\textnormal{Sym}(\mathcal T)$. The subdivided tessellation is characterised by the fact that each tile has no longer new symmetries. 
%Another possibility is to consider a suitable finite index subgroup of $\textnormal{Sym}(\mathcal T)$ in order to make the association \eqref{eq:tesstosym} well-defined for any tessellation $\mathcal T$ in which any tile may have non-trivial symmetries that preserve the tessellation. More precisely, i
If $\Gamma<\textnormal{Sym}(\mathcal T)$ is a subgroup of finite index $[\Gamma,\,\textnormal{Sym}(\mathcal T)]$ with cosets $\textnormal{Sym}(\mathcal T)\,\gamma_1,\dots,\textnormal{Sym}(\mathcal T)\,\gamma_n$ and if $t_o$ is a fundamental tile for $\textnormal{Sym}(\mathcal T)$, then $\gamma_1\cdot t_o\,\cup\,\cdots\,\cup\,\gamma_n\cdot t_o$ is a fundamental tile for $\Gamma$. The most important example in this sense is given by the following

\begin{prop}\label{prop:orpres}
Let $\mathcal T$ be a symmetric tessellation of $S$ and let $\textnormal{Sym}(\mathcal T)$ its group of symmetries with fundamental tile $t_o$. Suppose $\textnormal{Sym}(\mathcal T)$ includes an orientation-reversing element $\gamma$, then $t_o\cup \gamma\cdot t_o$ is a fundamental tile for the subgroup $\textnormal{Sym}^+(\mathcal T)$ of orientation-preserving symmetries. Moreover, $\gamma\in\textnormal{Sym}(\mathcal T)$ can be chosen so that $t_o\cup \gamma\cdot t_o$ is a polygon, that is a fundamental tile for $\textnormal{Sym}^+(\mathcal T)$.
\end{prop}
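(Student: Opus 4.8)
The plan is to read off the first assertion from the coset principle recalled just before the statement, and then to make a careful choice of the orientation-reversing element $\gamma$ for the ``moreover''.

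First I would identify $\textnormal{Sym}^+(\mathcal T)$ as the kernel of the orientation character $\varepsilon\colon\textnormal{Sym}(\mathcal T)\to\{\pm1\}$ obtained by restricting the orientation homomorphism on $\textnormal{Iso}(S)$. The hypothesis that $\textnormal{Sym}(\mathcal T)$ contains an orientation-reversing element $\gamma$ is exactly the statement that $\varepsilon$ is surjective, so $[\textnormal{Sym}(\mathcal T):\textnormal{Sym}^+(\mathcal T)]=2$ and, for \emph{any} orientation-reversing $\gamma$, the pair $\{1,\gamma\}$ is a complete system of coset representatives for $\textnormal{Sym}^+(\mathcal T)$ in $\textnormal{Sym}(\mathcal T)$. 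Substituting $\Gamma=\textnormal{Sym}^+(\mathcal T)$, $n=2$ and these two representatives into the principle quoted above then gives that $t_o\cup\gamma\cdot t_o$ is a fundamental tile for $\textnormal{Sym}^+(\mathcal T)$, which proves the first sentence. What it leaves open is whether this fundamental domain is connected and bounded by a closed polygonal chain, i.e.\ whether it is a polygon in the sense of \S\ref{poinpol}.

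For the ``moreover'' I would not take an arbitrary $\gamma$ but the one carrying $t_o$ onto an adjacent subtile. Recall that $t_o$ is a tile of the subdivided tessellation, and that $\textnormal{Sym}(\mathcal T)$ acts simply transitively on the tiles of that tessellation: transitively because $\mathcal T$ is symmetric and the stabiliser of an original tile permutes its subtiles transitively, and simply because the tiles of the subdivided tessellation have trivial stabiliser. In particular, if $e$ is an edge of $t_o$ and $t_e$ is the subtile sharing $e$ with $t_o$, there is a unique $\gamma\in\textnormal{Sym}(\mathcal T)$ with $\gamma\cdot t_o=t_e$; and by Remark~\ref{rem:color} this $\gamma$ is orientation-reversing, since it sends a subtile onto an adjacent one and hence switches the black/white colouring. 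Because the tessellation is edge-to-edge, $t_o$ and $t_e=\gamma\cdot t_o$ meet exactly along the common edge $e$ and have disjoint interiors, so $t_o\cup\gamma\cdot t_o$ is connected and its boundary is the closed polygonal chain formed by the edges of $t_o$ and of $t_e$ other than $e$. Thus $t_o\cup\gamma\cdot t_o$ is a polygon, and since $\gamma$ is orientation-reversing the first part shows it is a fundamental tile for $\textnormal{Sym}^+(\mathcal T)$.

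The substantive inputs are all borrowed from the preceding discussion -- the triviality of the stabiliser of a subdivided tile, the chessboard colouring of Remark~\ref{rem:color} with its compatibility with orientation, and the edge-to-edge hypothesis -- and, granting these, the remaining work is bookkeeping with index-two cosets together with the observation that gluing two tiles along a common edge produces a polygon. I expect the one place deserving a little care to be precisely this last observation: one must check that the boundary curve of $t_o\cup t_e$ is simple, which is immediate when the subtiles are convex -- in particular when, as for every regular tessellation, the subdivided tiles are triangles -- and is where one invokes convexity of the subtiles in the general case.
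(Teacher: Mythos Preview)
Your argument is correct and is exactly the approach the paper intends: the proposition is stated there without proof, as ``the most important example'' of the finite-index coset principle recorded just before it, and your first paragraph is precisely that specialisation to index two. For the ``moreover'' the paper gives no details at all; your choice of $\gamma$ as the unique element carrying $t_o$ onto an adjacent subtile, together with the appeal to Remark~\ref{rem:color} to see that such a $\gamma$ reverses orientation, is the natural way to fill the gap and is fully consistent with how the paper later uses the proposition (doubling a triangle across one of its edges).
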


\noindent In the light of this Proposition, from now on we shall restrict our discussions to groups of orientation-preserving isometries.

\smallskip

\subsection{Fundamental polygon} In the present subsection we aim to determine whether a polygon $\mathcal P$ can be the fundamental tile for a symmetric tessellation $\mathcal T$. Throughout the present sub-section, let $S$ be anyone of the three classical geometries $\mathbb E^2,\,\mathbb S^2$ or $\mathbb H^2$. Let us begin with the following

\begin{defn}
Let $S$ be a geometric surface and let $\Gamma<\text{Iso}(\,S)$ be a group acting on $S$. We shall say that the action of $\Gamma$ is \textit{properly discontinuous} if for every compact set $K\subset S$ we have that $K\,\cap\,\gamma\cdot K=\phi$ for all but finitely many $\gamma\in\Gamma$.
\end{defn}

\begin{defn}[Fundamental domain] Let $\Gamma<\text{Iso}^+(\,S)$ be a group acting properly discontinuously on $S$. A closed subset $\Omega\subset S$ is called a \textit{fundamental domain} for $S$ if 
\begin{itemize}
    \item[1.] $\textnormal{int}(\Omega)$ is an open domain,
    \smallskip
    \item[2.] there is a fundamental set $F$ such that $\textnormal{int}(\Omega)\subset F\subset \Omega$, where a fundamental set for $\Gamma$ is set which contains exactly one point from each orbit in $S$.
\end{itemize}
\end{defn}

\noindent We shall say that $\Omega$ and its images via $\Gamma$ \textit{tessellates} $S$. In fact, if $\Omega$ is a fundamental domain for a group $\Gamma$ acting on $S$, then $\textnormal{int}(\Omega)\,\cap\,\gamma(\,\textnormal{int}(\Omega)\,)=\phi$ for all $\gamma\neq 1$ and 
\begin{equation}
    \bigcup_{\gamma\,\in\,\Gamma} \gamma(\Omega)= S.
\end{equation}

\noindent Based on these definitions, we can now state the following becomes a straightforward consequence.

\begin{prop}\label{prop:discact}
For a symmetric tessellation $\mathcal T$ of $S$, its group of symmetries $\textnormal{Sym}(\mathcal T)$ acts properly discontinuously on $S$.
\end{prop}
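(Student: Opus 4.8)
The plan is to show that $\textnormal{Sym}(\mathcal T)$ acts properly discontinuously by exploiting the discrete nature of the tessellation $\mathcal T$. The key idea is that any symmetry of $\mathcal T$ is determined by how it moves a single fixed fundamental tile $t_o$, and there are only finitely many tiles of the subdivided tessellation that can meet a given compact set. More precisely, let $K \subset S$ be compact. First I would pass to the subdivided tessellation associated to $\mathcal T$, whose tiles are the images $\gamma \cdot t_o$ for $\gamma \in \textnormal{Sym}(\mathcal T)$, each tile being hit by exactly one group element via the identification \eqref{eq:tesstosym}. Since $K$ is compact and the subtiles are closed with disjoint interiors covering $S$, and since the tiles are ``uniformly non-degenerate'' (all congruent to a fixed compact polygon with nonempty interior), only finitely many subtiles $\gamma_1 \cdot t_o, \dots, \gamma_m \cdot t_o$ can intersect $K$; I would record this as a lemma, proving it by a standard compactness/covering argument (cover $K$ by finitely many metric balls each small enough to meet only boundedly many subtiles, using that each subtile contains a ball of a fixed radius).

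Next I would argue that if $\gamma \in \textnormal{Sym}(\mathcal T)$ satisfies $K \cap \gamma \cdot K \neq \emptyset$, then in particular the subtile $\gamma \cdot t_o$ is ``close to'' $K$ — indeed, if $x \in K$ with $\gamma^{-1} x \in K$, then $\gamma$ carries a subtile meeting $K$ to a subtile meeting $K$. Fixing any subtile meeting $K$, its $\gamma$-image must again be one of the finitely many subtiles meeting a fixed enlarged compact set $K'$ (a closed neighbourhood of $K$ of radius the diameter of $t_o$). Since $\gamma$ is the unique symmetry sending $t_o$ to $\gamma \cdot t_o$, and there are only finitely many admissible target subtiles, there are only finitely many such $\gamma$. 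This establishes that $K \cap \gamma \cdot K = \emptyset$ for all but finitely many $\gamma \in \textnormal{Sym}(\mathcal T)$, which is exactly proper discontinuity.

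I expect the main obstacle to be the local finiteness statement for the subdivided tessellation, i.e. that only finitely many subtiles meet a given compact set. This is intuitively obvious from the pictures but requires care: one must use that the tessellation is by genuine polygons (bounded, with interior) all congruent to a fixed prototile, so that each subtile contains an embedded metric disc of some uniform radius $r > 0$ and has diameter at most some uniform $D$. Then a compact set $K$, being contained in a finite union of $r/2$-balls, can meet only boundedly many such subtiles, since the disjoint-interior discs of radius $r$ sitting inside distinct subtiles that meet $K$ are all contained in a bounded region, whose finite volume bounds their number. (In the spherical case this is automatic since $\mathbb S^2$ is itself compact and the whole tessellation is finite; the substantive cases are $\mathbb E^2$ and $\mathbb H^2$.) Once local finiteness is in hand, the rest of the argument is a short bookkeeping exercise with the bijection between symmetries and subtiles of the subdivided tessellation.
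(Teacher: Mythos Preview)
Your approach is correct, but you should know that the paper does not actually give a proof of this proposition: it is stated immediately after the definitions of properly discontinuous action and fundamental domain, with the remark that it ``becomes a straightforward consequence'' of those definitions. In other words, the paper treats the result as essentially definitional once one has the subdivided tessellation and the bijection \eqref{eq:tesstosym} in hand.

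Your proposal is precisely the natural way to unpack that ``straightforward consequence''. The two ingredients you isolate --- local finiteness of the subdivided tessellation (only finitely many congruent subtiles can meet a compact set, via a volume/packing argument) and the bijection $\gamma \leftrightarrow \gamma\cdot t_o$ --- are exactly what the paper's setup provides, and your bookkeeping with the enlarged compact $K'$ is the standard way to finish. So there is no divergence in method to speak of; you have simply written out what the paper leaves implicit.
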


\noindent We now determine necessary and sufficient conditions for a polygon $\mathcal P$ to be a fundamental tile for a symmetric tessellation. In the light of Proposition \ref{prop:orpres} we shall restrict our discussion to groups of orientation preserving symmetries.

\subsubsection{Side and angle conditions} If a compact polygon $\mathcal P$ is a fundamental tile for a group $\Gamma$ of orientation-preserving isometries of $S$ then the following holds
\begin{itemize}
    \item[i.] for each side $s$ of $\mathcal P$ there is exactly one other side $s'$ of $\mathcal P$ of the form $s'=\gamma\cdot s$ with $\gamma\in\Gamma$ -- the elements $\gamma$ are called side-pairing transformations of $\mathcal P$, and
    \smallskip
    \item[ii.] if each side $s$ is identified with the corresponding $s'$, then each set of vertices identified as a result corresponds to a set of corners of $\mathcal P$ with angle sum $\frac{2\pi}{p}$ for some $p\in \mathbb Z^+$.
\end{itemize}

\noindent Let us show why these conditions holds, hence the reason why these are necessary conditions. We begin with by showing

\begin{proof}[Necessity of condition i.] Let $\gamma_1\cdot \mathcal P,\dots,\gamma_n\cdot \mathcal P$ be the tiles who share an edge with $\mathcal P$. Let $s_i$ be the edge shared by $\mathcal P$ and $\gamma_i\cdot \mathcal P$. Then, it is an easy matter to observe that $s_i'=\gamma_i^{-1}\cdot s_i$ is shared by $\gamma_i^{-1}\cdot\mathcal P$ and $\mathcal P$. Hence $s_i'$ is also a side of $\mathcal P$. It may very well happen that $s_i=s_i'$, in this case $\gamma_i=\gamma_i^{-1}$ that means $\gamma_i$ is a rotation of order $2$ because it is an orientation preserving isometry. Whenever this is the case, the midpoint of $s_i$ is fixed by $\gamma_i$ and it may be regarded as a new vertex of $\mathcal P$ and $s_i$ is split into two sub-segments and the mapping $\gamma_i$ swaps them. It remains to show that for any $s_i$ there is only one $s_i'$. If this is not the case, then there would be two points in the interior of $\mathcal P$ in the same $\Gamma$-orbit. However this contradicts the fact that $\mathcal P$ is a fundamental tile.
\end{proof}

\begin{proof}[Necessity of condition ii.] For a polygon $\mathcal P$, let $\{v_1,\dots,v_n\}$ be the set of its vertices. Two vertices $v_{i}$ and $v_{j}$ are in the same vertex cycle if and only if there is a mapping $\gamma\in\Gamma$ such that $\gamma(v_i)=v_j$. It is an easy matter to check that two cycles are disjoint or coincide. Let $\{v_{i_1},\dots,v_{i_k}\}$ be a vertex cycle and suppose $v_{i_1},\dots,v_{i_k}$ is the order of the corners induced by the side-pairing transformations. Then if $v=\gamma_{h}(v_{i_h})$ is an image of any such vertex under $\gamma_{h}\in\Gamma$, the corners of tiles which meet at $v$ are just images of the corners at $v_{i_1},\dots,v_{i_k}$ in the same cyclic order. The angle $2\pi$ at $v$ is an integer multiple of the angle sum of the corners at 
$v_{i_1},\dots,v_{i_k}$. The conclusion follows.
\end{proof}

\noindent We may expect that a polygon $\mathcal P$ that satisfies the side and angle conditions can always be used to tile the surface $\mathbb S^2$, $\mathbb E^2$ or $\mathbb H^2$ to which it belongs, and indeed this the statement of Poincar\'e's Theorem.

\subsubsection{Poincar\'e's Theorem} Our aim here is to show that the side and angle conditions are also sufficient for a compact polygon to be a fundamental polygon. Let $\mathcal P$ be a given polygon in $S$ equal to $\mathbb S^2$, $\mathbb E^2$ or $\mathbb H^2$ and let $\{\gamma_1,\dots,\gamma_n\}$ be a collection of isometries in $\text{Iso}^+(\,S)$ that realises the sides pairing $s_i\mapsto \gamma_i\cdot s_i$. We are going to prove that $\mathcal P$ is the fundamental region for the group $\Gamma=\langle \gamma_1,\dots,\gamma_n\rangle$. In other words, we are going to show that $\mathcal P$ tessellates $S$ and $\Gamma$ appears as the group of symmetries of such a tessellation.

\smallskip

\noindent For such a polygon $\mathcal P$, the gist of the idea is to take a collection of tiles $\{\,t_{\gamma}\,\}_{\gamma\,\in\,\Gamma}$, each one congruent to $\mathcal P$, labelled by the elements of $\Gamma$. Let $t_1$ be the (unique) tile labelled with $1\in\Gamma$. Such a tile will serve as the base tile for the following 

\smallskip

\begin{quote}
    \textbf{Construction:} \textit{For each $\gamma\in\Gamma$, let $t_{\gamma}$ be a copy of $t_1$. Notice that $\gamma:\,t_1\longrightarrow t_{\gamma}$ provides an isometry between $t_1$ and $t_{\gamma}$. We label the sides of $t_{\gamma}$ with the same labels $s_i$ as their preimages in $t_1$ via $\gamma$. Let $\Sigma$ be the surface formed from these tiles obtained by identifying the side $s_i'$ of $t_{\gamma}$ with $s_i$ of the tile $t_{\gamma\gamma_i}$, where $\gamma_i$ relates $s_i$ and $s_i'$ as $\gamma_i(s_i)=s_i'$.}
\end{quote}

\smallskip

\noindent Notice that this ensures, in particular, that $t_{\gamma\gamma_i}$ is adjacent to $t_{\gamma}$ as $t_{\gamma_i}$ is adjacent to $t_1$. Our aim now is to prove that $\Sigma\,=\,S$, where $S$ is equal $\mathbb S^2$, $\mathbb E^2$ or $\mathbb H^2$ to which $\mathcal P$ belongs. We begin with the following

\begin{prop}\label{prop:scomplete}
$\Sigma$ is a complete geometric surface.
\end{prop}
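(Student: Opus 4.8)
The plan is to show that the quotient surface $\Sigma$, built by gluing the tiles $\{t_\gamma\}_{\gamma\in\Gamma}$ along the side-pairing identifications, carries a metric that is locally isometric to $S$ and is geodesically complete; by the Hopf–Rinow theorem it then suffices to verify that $\Sigma$ is a complete metric space. First I would check that $\Sigma$ is a \emph{geometric surface}, \textit{i.e.} that every point has a neighbourhood isometric to an open set in $S$. There are three types of points to examine: points in the interior of some tile $t_\gamma$ (trivial, the tile is already an isometric copy of $\mathcal P\subset S$); points in the interior of an edge $s_i$ (here exactly two tiles $t_\gamma$ and $t_{\gamma\gamma_i}$ meet, and since $\gamma_i$ is an orientation-preserving isometry of $S$ carrying $s_i$ to $s_i'$, the two half-disk neighbourhoods glue to a genuine round disk in $S$ — in the degenerate case $s_i=s_i'$ the argument from the necessity of condition~i.\ applies, with $\gamma_i$ an order-two rotation about the midpoint); and points that are images of a vertex. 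The vertex case is exactly where the angle condition~ii.\ is used: for a vertex cycle $\{v_{i_1},\dots,v_{i_k}\}$ the tiles meeting at the identified vertex come in a cyclic sequence, and their corner angles sum to $2\pi/p$ multiplied by the number of tiles, which by condition~ii.\ equals $2\pi$ exactly. Hence the union of these finitely many corner-sectors forms a full disk in $S$, with no cone angle defect. This gives $\Sigma$ the structure of a surface with a metric of constant curvature equal to that of $S$.

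\textbf{Completeness.} Once $\Sigma$ is known to be a geometric surface locally modelled on $S$, I would argue completeness directly. Since $\mathcal P$ is compact, it has a positive inradius, and more importantly there is a uniform lower bound $\varepsilon>0$ such that every point of $\Sigma$ has a neighbourhood isometric to the ball $B(0,\varepsilon)$ in $S$: for interior points this is clear, for edge and vertex points it follows because there are only finitely many tiles around any edge or vertex and each contributes a sector or half-disk of definite size, and the combinatorics (which tile is glued to which) is governed entirely by the finitely many side-pairing transformations, so the local picture is one of finitely many isometry types. A Riemannian surface in which every point admits an isometrically embedded $\varepsilon$-ball of fixed radius is complete: any Cauchy sequence is eventually contained in one such ball, which is relatively compact, so it converges. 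Equivalently, one checks that any geodesic segment can be extended indefinitely — a geodesic running in the interior of a tile either closes up or eventually reaches an edge, at which point the isometric gluing across that edge continues it into the adjacent tile, and this process never terminates. By Hopf–Rinow, metric completeness and geodesic completeness coincide here, so $\Sigma$ is a complete geometric surface. (Note this step does not yet assert $\Sigma=S$; that identification is the content of the following step, where the Killing–Hopf theorem will be invoked together with simple connectedness of $\Sigma$.)

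\textbf{Main obstacle.} The delicate point is the verification at the vertices, and more precisely making rigorous the claim that the tiles encountered as one circles around an identified vertex form a \emph{consistent cyclic chain that closes up exactly once}. One must rule out that the chain of corner-sectors wraps around more than once (which would produce a branched cover of a disk rather than a disk) or fails to close (which would make $\Sigma$ non-Hausdorff or give an incomplete metric near that point). This is precisely where the hypothesis that the angle sum over a vertex cycle is $2\pi/p$ for a \emph{positive integer} $p$ — rather than merely some positive real — does the work: it forces the total angle accumulated around the identified vertex, namely $p$ times $2\pi/p$, to be exactly $2\pi$. I would make this precise by fixing one corner of $\mathcal P$ at the vertex cycle, following the side-pairing maps $\gamma_h$ cyclically, and tracking both the accumulated rotation angle and the group element labelling the current tile; the side condition~i.\ guarantees each edge has a unique partner, so the sequence of tiles is determined and returns to its start after traversing the whole cycle $p$ times, at which point the total angle is $2\pi$ and the composition of the side-pairing maps around the vertex is a rotation by $2\pi$, hence the identity locally. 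Everything else — the edge case, the interior case, the reduction via Hopf–Rinow — is routine once this combinatorial-angular bookkeeping at vertices is in place.
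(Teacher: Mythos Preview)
Your argument is correct and follows essentially the same outline as the paper's: interior and edge points are routine, and the crux is the vertex case. Where you sketch ``tracking both the accumulated rotation angle and the group element labelling the current tile,'' the paper carries this out by an explicit conjugation computation: writing $r_j(v_j)$ for the rotation by $\theta_j$ about $v_j$, one has $g_{i_k}r_k(v_k)\cdots g_{i_1}r_1(v_1)=\mathrm{Id}$ (since the left side fixes an edge pointwise), and the relations $r_{j+1}(v_{j+1})\,g_{i_j}\cdots g_{i_1}=g_{i_j}\cdots g_{i_1}\,r_{j+1}(v_1)$ collapse this to $(g_{i_k}\cdots g_{i_1})\cdot r_k(v_1)\cdots r_1(v_1)=\mathrm{Id}$. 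Hence $g_{i_k}\cdots g_{i_1}$ is a rotation by $2\pi/p$ about $v_1$, so its $p$-th power is the identity and the chain of corners closes after exactly $p$ traversals with total angle $2\pi$. This is precisely the bookkeeping you describe, made algebraically explicit.

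For completeness the paper takes a shorter route than your uniform-injectivity-radius argument: since $\mathcal P$ is compact, any closed bounded subset of $\Sigma$ is contained in a finite union of tiles and is therefore compact, so Hopf--Rinow applies directly. Both arguments are valid; yours highlights the local homogeneity of $\Sigma$ (finitely many isometry types of neighbourhoods), while the paper's exploits the tiling structure in one line.
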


\begin{proof}
In the first place, one has to show that $\Sigma$ is a geometric surface. It is sufficient to show that $\Sigma$ is a surface as the global geometry is then determined by the local geometry of each single tile. It is immediate to notice that any point in the interior of $\gamma\cdot\mathcal P$ admits an open neighborhood homeomorphic to an open ball in $\mathbb R^2$. Moreover, since the sides of $\mathcal P$ are identified in pairs, interior points of any side also have disc neighborhoods. However, showing that vertices also have open neighborhoods homeomorphic to an open ball in $\mathbb R^2$ is less obvious and requires more detailed argument.

\smallskip

\noindent Let $\{v_1,\dots,v_k\}$ be a vertex cycle of $\mathcal P$. This means that the vertices are related by a cyclic sequence of side pairings $g_{i_1},\dots,g_{i_k}$. Let $\theta_i$ denote the magnitude of the vertex $v_i$. Since $\mathcal P$ is supposed to satisfy the angle condition, the sum $\theta_1+\cdots+\theta_k$ is equal to $2\pi/p$ for some $p\in\mathbb Z^+$. Let $r_i(v_i)$ denote the rotation of angle $\theta_i$ about the vertex $v_i$, in the sense from $s_{i_{j-1}}'$ to $s_{i_j}$. We have
\begin{equation}\label{eq:genform}
    g_{i_k}\,r_k(v_k)\cdot\cdots\cdot g_{i_2}\,r_2(v_2)\, g_{i_1}\,r_1(v_1)=\text{Id}
\end{equation}
because the isometry on the left-hand side is the identity on $s_{i_k}$. Moreover one may observe that 
\begin{equation}\label{eq:r2}
    r_2(v_2)=g_{i_1}\,r_2(v_1)\,g_{i_1}^{-1}
\end{equation}
because $g_{i_1}$ maps $v_1$ to $v_2$. By plugging \eqref{eq:r2} into \eqref{eq:genform}, we obtain
\begin{equation}\label{eq:newgenform}
    g_{i_k}\,r_k(v_k)\cdot\cdots\cdot g_{i_2}\, g_{i_1}\,r_2(v_1)\,r_1(v_1)=\text{Id}.
\end{equation}
In the same fashion, it is possible to observe that
\begin{equation}
    r_3(v_3)\,g_{i_2}\,g_{i_1}=g_{i_2}g_{i_1}r_3(v_1)
\end{equation}
and then, by plugging this latter into \eqref{eq:newgenform}, we obtain
\begin{equation}
    g_{i_k}\,r_k(v_k)\cdot\cdots\cdot g_{i_3}\,g_{i_2}\, g_{i_1}\,r_3(v_1)\,r_2(v_1)\,r_1(v_1)=\text{Id}.
\end{equation}
By iterating the same process $k$ times we eventually obtain
\begin{equation}
    g_{i_k}\cdot\cdots\cdot g_{i_1}\cdot r_k(v_1)\cdot\cdots\cdot r_1(v_1)=\text{Id}.
\end{equation}
Notice that $r_k(v_1)\cdot\cdots\cdot r_1(v_1)$ is a rotation of angle $\theta_1+\cdots+\theta_k=2\pi/p$ about $v_1$ and hence $g_{i_k}\cdot\cdots\cdot g_{i_1}$ is a rotation about the same vertex of angle $2\pi/p$. It follows that $\big(g_{i_k}\cdot\cdots\cdot g_{i_1}\big)^p=\text{Id}$. Therefore, the sequence of corners at a vertex of $\Sigma$ corresponding to the vertex cycle $\{v_1,\dots,v_k\}$ closes up after precisely $p$ repetitions of the cycle; namely when the angle sum is $2\pi$ as required.

\smallskip

\noindent It remains to show that $\Sigma$ is complete. It is an easy matter to show that any closed and bounded subset $K\subset \Sigma$ is contained in the union of finitely tiles, that is finite many copies of our polygon $\mathcal P$. Since such a union is compact and $K$ is closed we deduce that $K$ is also compact. Therefore $\Sigma$ is complete as desired.
\end{proof}

\noindent We are now ready to prove 

\begin{pthm}\namedlabel{thm:pthm}{Poincar\'e's Theorem}
%\label{thm:pointhm}
A compact polygon $\mathcal{P}$ satisfying the side and angle conditions is a fundamental region for the group $\Gamma$ generated by the side-pairing transformations of $\mathcal{P}$.
\end{pthm}

\begin{proof}
Let $\mathcal P$ be a polygon that satisfies the side-angle conditions and realise $\Sigma$ as described above. As a consequence of Proposition \ref{prop:scomplete}, we already know that $\Sigma$ is complete. However we still do not whether it is simply connected. 

\smallskip

\noindent Let us assume in the first place that $\pi_1(\,\Sigma\,)=\{1\}$ -- we shall later prove that this is always the case. If this is the case, $\Sigma\,=\,S$ and the tessellation of $\Sigma$ by polygons $t_{\gamma}$ is a tessellation of the surface $S$; that is one of the surfaces $\mathbb R^2$, $\mathbb S^2$ or $\mathbb H^2$ containing the original polygon $\mathcal P$. We take $t_1$, the unique tile associated to $1\in\Gamma$, simply as $\mathcal P$. We aim to show that $\gamma\cdot\mathcal P\,=\, t_{\gamma}$. This can be proved by induction on $k$, where
\begin{equation}
    \gamma=\gamma_{i_1}^{\varepsilon_1}\cdot\,\cdots\,\cdot \gamma_{i_k}^{\varepsilon_k}
\end{equation}
and each $\varepsilon_i=\pm1$. The claim is true for $k=1$ because the neighbors $\gamma_i^{\pm1}\,\cdot\,\mathcal P$ of $\mathcal P$ are exactly $t_{\gamma_i^{\pm1}}$ by definition. Now we assume $\gamma'\cdot\mathcal P=t_{\gamma}$, where $\gamma$ is the product of $k-1$ generators or their inverses. By definition of $\Sigma$, we have that $t_{\gamma\,\gamma_i^{\pm1}}$ is adjacent to $t_{\gamma}$ exactly as $t_{\gamma_i^{\pm1}}$ is adjacent to $t_1$. On the other hand, by inductive hypothesis, $t_{\gamma}=\gamma\cdot\mathcal P$ and $t_{\gamma_i^{\pm1}}=\gamma_i^{\pm1}\cdot\mathcal P$. As a consequence we deduce
\begin{equation}
    t_{\gamma\,\gamma_i^{\pm1}}=\gamma\cdot t_{\gamma_i^{\pm1}}=\gamma\,\gamma_i^{\pm1}\cdot\mathcal P
\end{equation}
and the induction is complete. Therefore, if $\pi_1(\,\Sigma\,)=\{1\}$, the polygons $t_\gamma$ tessellating $\Sigma$ are precisely the polygons $\gamma\cdot\mathcal P$, for $\gamma\in\Gamma$. Therefore, $\mathcal P$ is a fundamental region for $\Gamma$ as desired.

\smallskip

\noindent We now drop the assumption $\pi_1(\,\Sigma\,)=\{1\}$. By classical covering theory the universal cover of $\Sigma$ is either $S=\mathbb E^2$ or $S=\mathbb H^2$. Consider the tessellation of $S$ obtained by lifting the tessellation of $\Sigma$. The gist of the idea now is to show that exactly one polygon of $S$ lies over each tile $t_{\gamma}\subset \Sigma$, thus proving that $\pi_1(\,\Sigma\,)=\{1\}$ and hence $\Sigma=S$. Let $t_1$ the fundamental tile for the tessellation on $\Sigma$ and let $\mathcal P$ and $\gamma\cdot \mathcal P$ two different lifts. For a point $p\in\mathcal P$, let $\gamma(p)\in\gamma\cdot\mathcal P$ and let $c:[0,1]\to S$ be a path joining them. In $S$, the path $c$ joins $\mathcal P$ from $\gamma\cdot\mathcal P$ by crossings finitely many edges corresponding to $\gamma=\gamma_{i_1}^{\varepsilon_1}\cdot\,\cdots\,\cdot \gamma_{i_k}^{\varepsilon_k}$. The path $c$ projects to a closed loop, say $\overline c$ in $\Sigma$ as $\mathcal P$ and $\gamma\cdot \mathcal P$ are two different lifts of $t_1$. We can notice that the sequence of crossings of $\overline c$ in $\Sigma$ coincides by construction of with the sequence of crossing of $c$ in $S$. Since $\overline c$ is a closed loop then $\gamma=1$ and hence there is only one tile over $t_1$ corresponding to $\mathcal P$. Since any tile can be the fundamental tile (tiles are indistinguishable in a symmetric tessellation) the conclusion holds for any tile. Therefore $\pi_1(\,\Sigma\,)=\{1\}$ as desired.
\end{proof}

\noindent As a direct consequence we obtain a symmetric tessellation $\mathcal T$ of $S$ having $\mathcal P$ as a prototile and the group $\Gamma$ turns out to be group of symmetries of $\mathcal T$. We now focus on regular tessellations as special and interesting examples of symmetric polygons.

\subsection{Regular tessellations}\label{ssec:regtess} A regular tessellation $\mathcal T$ of $S$ is a symmetric tessellation in which each tile is a regular polygon. Consider a two-dimensional tessellation with $q$ regular $p$-gons at each polygon vertex. Recall that for a $p-$gon in the Euclidean plane the sum of the inner angles is equal to $(p-2)\pi$. Since the angle at each vertex is $2\pi/q$, in the Euclidean plane
\begin{equation}
    \left(1-\frac{2}{p} \right)\pi=\frac{2\pi}{q}.
\end{equation}
This is equivalent to
\begin{equation}\label{eq:sse}
    \frac1p\,+\,\frac1q=\frac12.
\end{equation}
so
\begin{equation}
    (p-2)(q-2)=4.
\end{equation}
It is an easy matter to check that the only possible pairs $\{p,q\}$ are $\{3,6\}$, $\{4,4\}$ and $\{6,3\}$ corresponding to the triangle tessellation, the square tessellation and the hexagonal tessellation of $\mathbb E^2$. It is worth noticing that the symmetry of the pairs $\{3,6\}$ and $\{6,3\}$ reflects the fact that the triangle tessellation and the hexagonal tessellation are dual. On the other hand, the pair $\{4,4\}$ is symmetric and, in fact, the square tessellation is \textit{self-dual}. These pairs $\{p,\,q\}$ along with the present notation are known in literature as \textit{Schl\"afli symbols}.

\medskip

\noindent For a $q$ regular $p$-gon in the spherical plane $\mathbb S^2$ the sum of the inner angles is strictly greater than $(p-2)\pi$. This is consequence of the classical fact that, for a spherical triangle the sum of the inner angles is always greater than $\pi$. Therefore, for a regular $p$-gon in the spherical plane

\begin{equation}
    \left(1-\frac{2}{p} \right)\pi<\frac{2\pi}{q}.
\end{equation}
By reasoning as above, this equation is equivalent to
\begin{equation}\label{eq:sss}
    \frac1p\,+\,\frac1q>\frac12,
\end{equation}
Therefore
\begin{equation}
    (p-2)(q-2)<4.
\end{equation}
In this case there infinitely many solutions; in fact each pair $\{2,n\}$, for $n\ge 2$, satisfies \eqref{eq:sss}. A \textit{lune} on a sphere is a region bounded by two half great circles which meet at antipodal points. The word "lune" derives from "\textit{luna}", the Latin word for moon. For any $n\ge2$, an $n$-gonal \textit{hosohedron} is a tessellation of a sphere into $n$ lunes that all share the same two polar opposite vertices. An $n$-gonal hosohedron corresponds to the Schl\"afli symbol $\{2,n\}$. By symmetry, any pair $\{n,2\}$ is also a solution of \eqref{eq:sss} for any $n\ge2$. In this case we have \textit{dihedrons}, that is a polyhedron with two faces that share the same $n$ edges. As a spherical tiling, a regular dihedron is a polyhedron with two $n$-sided faces covering the sphere, each face being a hemisphere, and vertices on a great circle which are equally spaced. 

\smallskip

\noindent If we restrict to $p,\,q\ge3$ then there are finitely many solutions. In fact, there are only five solutions corresponding to the Schl\"afli symbols $\{3,3\}$, $\{3,4\}$, $\{4,3\}$, $\{3,5\}$ and $\{5,3\}$. There are some symmetric pairs which, once again, are far from being casual. The pairs $\{3,4\}$ and $\{4,3\}$, correspond to the tessellations determined by the cube and the octahedron respectively -- we shall see in Section \S\ref{platsol} how a Platonic solid determines a spherical tessellation. 
%, duality between regular polyhedrons will be introduced in Section \S\ref{platsol}
Similarly, the pairs $\{3,5\}$ and $\{5,3\}$ correspond to the tessellations determined by the icosahedron and the dodecahedron respectively.  The remaining pair $\{3,3\}$ is the Schl\"afli symbol of the spherical tiling arising from the tetrahedron. We shall consider the connection between spherical tiling and Platonic solids in Section \S\ref{platsol}.

\smallskip

\noindent We finally consider regular $q$ regular $p$-gons in the spherical plane $\mathbb H^2$ the sum of the inner angles is strictly less than $(p-2)\pi$. In this case, this is a consequence of the classical fact that, for a hyperbolic triangle the sum of the inner angles is always less than $\pi$. For a regular $p$-gon in the hyperbolic plane

\begin{equation}
    \left(1-\frac{2}{p} \right)\pi>\frac{2\pi}{q}.
\end{equation}
By reasoning as above, this equation is equivalent to
\begin{equation}\label{eq:ssh}
    \frac1p\,+\,\frac1q<\frac12,
\end{equation}
Therefore
\begin{equation}
    (p-2)(q-2)>4.
\end{equation}
\noindent Even in this case there are infinitely many solutions to \eqref{eq:ssh}. In fact, the following holds.

\begin{prop}
For any pair of positive integers $\{p,\,q\}$ that satisfies the inequality \eqref{eq:ssh} there exists a $q$-regular $p$-gon in $\mathbb H^2$. In particular, any such a polygon is the prototile for a regular tiling of $\mathbb H^2$. 
\end{prop}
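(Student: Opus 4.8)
The plan is to construct the polygon by a one-parameter continuity argument and then apply \ref{thm:pthm}. Note first that the hypothesis $\frac1p+\frac1q<\frac12$ forces $p,q\ge 3$ and is equivalent to the two-sided inequality $0<\frac{2\pi}{q}<\left(1-\frac2p\right)\pi$; so it suffices to prove that a regular hyperbolic $p$-gon, as its size varies, realises every interior angle in the open interval $\left(0,\left(1-\frac2p\right)\pi\right)$.

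\emph{Step 1: constructing a $q$-regular $p$-gon.} I work in the Poincar\'e disk $\mathbb D^2$ with center $o=0$. For $t\in(0,1)$ let $\mathcal P_t$ be the polygon with vertices $v_k:=t\,e^{2\pi ik/p}$, $k=0,\dots,p-1$, joined cyclically by hyperbolic geodesic segments. The rotation $z\mapsto e^{2\pi i/p}z$ is a hyperbolic isometry preserving $\mathcal P_t$, so $\mathcal P_t$ is a regular polygon; write $\alpha(t)$ for its common interior angle. Consider the central isosceles triangle $o\,v_0\,v_1$: its apex angle at $o$ equals $\frac{2\pi}{p}$ for every $t$, its two equal legs have hyperbolic length $\ell(t)=d_{\mathbb D^2}(o,v_0)$, a continuous increasing bijection $(0,1)\to(0,\infty)$, and — since the radius $o\,v_k$ bisects the interior angle of $\mathcal P_t$ at $v_k$ — the quantity $\tfrac12\alpha(t)$ is a base angle of this triangle. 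By the hyperbolic law of cosines this base angle depends continuously on $\ell$, tends to the Euclidean value $\frac\pi2-\frac\pi p$ as $\ell\to 0$, and tends to $0$ as $\ell\to\infty$ (in a hyperbolic triangle with one fixed vertex angle whose adjacent sides grow without bound the remaining two angles go to $0$). Hence $\alpha$ is continuous on $(0,1)$ with $\alpha(t)\to\left(1-\frac2p\right)\pi$ as $t\to 0^+$ and $\alpha(t)\to 0$ as $t\to 1^-$. By the intermediate value theorem there is $t_0\in(0,1)$ with $\alpha(t_0)=\frac{2\pi}{q}$, and $\mathcal P:=\mathcal P_{t_0}$ is a $q$-regular $p$-gon.

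\emph{Step 2: $\mathcal P$ is a prototile.} Joining $o$ to the $p$ vertices of $\mathcal P$ cuts it, by symmetry, into $p$ congruent isosceles triangles, each a hyperbolic triangle $\Delta$ with apex angle $\frac{2\pi}{p}$ at $o$ and both base angles equal to $\tfrac12\cdot\frac{2\pi}{q}=\frac\pi q$. Here the hypothesis reappears: the angle sum $\frac{2\pi}{p}+\frac{2\pi}{q}$ is strictly less than $\pi$, so $\Delta$ has positive area and is a genuine triangle. I now apply \ref{thm:pthm} to $\Delta$, with orientation-preserving side-pairings given by the rotation of angle $\frac{2\pi}{p}$ about $o$ (carrying one leg onto the other) and the rotation of angle $\pi$ about the midpoint $m$ of the base (carrying the base onto itself). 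The side condition is immediate, and the vertex cycles are: $\{o\}$, with angle sum $\frac{2\pi}{p}$; $\{m\}$, with angle sum $\pi=\frac{2\pi}{2}$; and the pair of base vertices, with angle sum $\frac\pi q+\frac\pi q=\frac{2\pi}{q}$ — so the angle condition holds too. Thus $\Delta$ is a fundamental region for the group $\Gamma$ it generates, yielding a symmetric tessellation $\mathcal T_\Delta$ of $\mathbb H^2$. Exactly $p$ tiles of $\mathcal T_\Delta$ meet around each $\Gamma$-image of $o$; grouping the tiles into these bunches of $p$ recovers a tessellation of $\mathbb H^2$ whose prototile is the regular polygon $\mathcal P$, and since $\Gamma$ permutes the bunches transitively this tessellation is symmetric, hence regular.

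\emph{The main obstacle.} Everything of substance is in Step 1, and there in the behaviour of the base angle of the central triangle as its leg length runs over $(0,\infty)$: the Euclidean limit (handled by the hyperbolic law of cosines, or by the principle that small hyperbolic triangles are nearly Euclidean) and the degenerating limit, where the legs escape to the ideal boundary and every angle but the fixed one collapses to $0$. Step 2 is then routine bookkeeping; the point to emphasise is that the single inequality $\frac1p+\frac1q<\frac12$ is exactly what puts the target angle $\frac{2\pi}{q}$ in the attainable range and, simultaneously, keeps the auxiliary triangle $\Delta$ non-degenerate.
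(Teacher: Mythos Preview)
Your proof is correct and follows essentially the same approach as the paper: a one-parameter family of regular $p$-gons in $\mathbb D^2$ centred at the origin, an intermediate-value argument on the interior angle, and then \ref{thm:pthm}. The only differences are cosmetic --- the paper computes the limiting angles via the area formula $\mathcal A_{\mathbb D^2}(\mathcal P_\rho)=(p-2)\pi - p\,\alpha(\rho)$ rather than via hyperbolic trigonometry on the central triangle, and it applies \ref{thm:pthm} directly to the polygon $\mathcal P$ (in one sentence, without spelling out the side-pairings) rather than to your constituent triangle $\Delta$; your Step~2 is thus somewhat more explicit than the paper's.
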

\noindent Figure \eqref{tg542} depicts an example of tessellation of the hyperbolic plane in right-angles pentagons. A different perspective of the same tiling can be seen in the picture below.

\begin{figure}[!ht]
  \centering
  \includegraphics[scale=0.185]{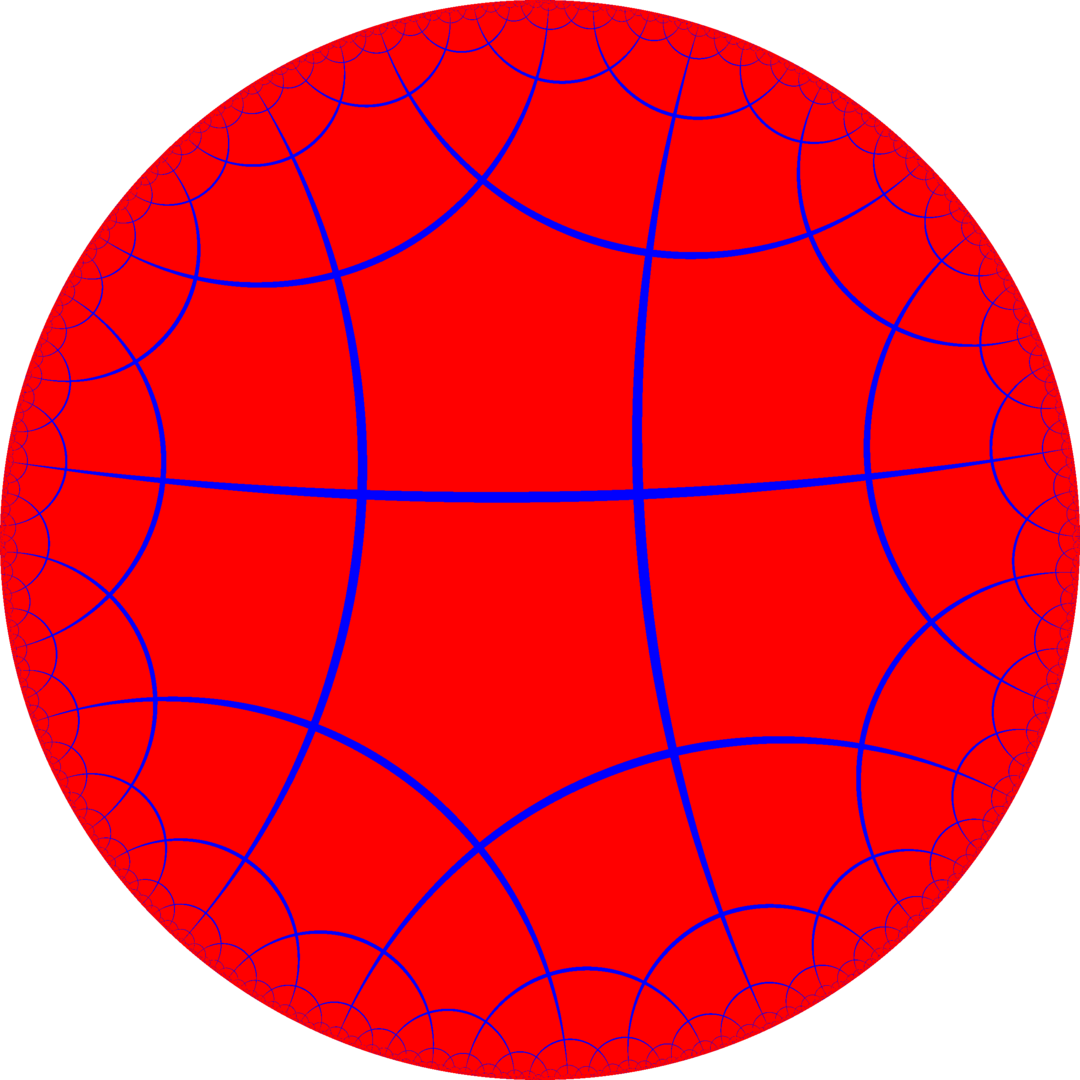}
  \caption{Triangular tessellation of the Poincar\'e disk in which each tile is a hyperbolic right-angles pentagon.}
  \label{tg2453}
\end{figure}

\begin{proof}
Let $p,\,q$ be a pair of integers that satisfy the inequality \eqref{eq:ssh}. In order to realise a $q$-regular hyperbolic $p$-gon it is better for us to work with the Poincar\'e disk. Consider $p$ rays $r_k$, for $k=1,\dots,p$, leaving from the origin $0\in\mathbb D^2$ and defined as
\begin{equation}
     r_k=\left\{ \,\rho\exp\left(\frac{2\pi\,i\,k}{p}\right)\,\,\Big|\,\,\rho\in\,[0,1)\, \right\}.
\end{equation}
\noindent For a fixed value $\rho\in[0,1)$ and define $v_k$ as the unique point on $r_k$ at distance $\rho$ from the origin. Let $\mathcal P_{\rho}$ be the polygon in $\mathbb D^2$ with vertices $v_1,\dots,v_p$. By construction, this is a regular hyperbolic $p$-gon. Let $\alpha(\rho)$ be the magnitude of any inner angle of $\mathcal P_{\rho}$ -- as it is regular they all have the same magnitude. It is an easy matter to check that the function $\rho\longmapsto \alpha(\rho)$ is continuous and that
\begin{equation}
    \lim_{\rho\to 1^-} \alpha(\rho)= 0 \quad \text{ and } \quad \lim_{\rho\to 0^+} \alpha(\rho)= \left( 1-\frac{2}{p}\right)\,\pi .
\end{equation}
In fact, let us write the hyperbolic area of $\mathcal P_{\rho}$ explicitly as a function of $\rho$
\begin{equation}
    \mathcal A_{\mathbb D^2}\left(\,\mathcal P_{\rho}\,\right)=(p-2)\pi\,-\,p\,\alpha(\rho).
\end{equation}
Notice that, as a function of $\rho$, it is continuous on the open interval $\big(\,0,\,1\,\big)$. For $\rho\mapsto 1^-$, at the limit $\mathcal P_{\rho}$ is an ideal regular polygon; that is all the vertices are at the infinity. In this case the hyperbolic area is as bigger as possible, that is $(p-2)\pi$ and the sum of the interior angles is equal to $0$. As a consequence, the first limit follows. For $\rho\to 0^+$, the hyperbolic area $\mathcal A_{\mathbb D^2}\left(\,\mathcal P_{\rho}\,\right)$ tends to $0$ and hence the second limit follows.

\smallskip

\noindent The mapping $\rho\longmapsto \alpha(\rho)$ is therefore monotone decreasing and for any value $\theta\,\in\,\left(\,0,\,\frac{p-2}{p}\,\pi\,\right)$ there exists a unique $\rho_o$ such that $\alpha(\rho_o)=\theta$. Since the pair $p,\,q$ satisfies the inequality \eqref{eq:ssh}, it follows that 
\begin{equation}
    \frac{2\pi}{q}<\left(1-\frac{2}{p}\right)\,\pi;
\end{equation}
in particular there exists $\rho_o$ such that $\alpha(\rho_o)=\frac{2\pi}{q}$. Therefore, $\mathcal P_{\rho_o}$ is the desired $q$ regular hyperbolic $p-$gon. Finally, notice that $\mathcal P_{\rho_o}$ satisfies the side and angle conditions and hence the second statement follows as a consequence of \ref{thm:pthm}.
\end{proof}

\smallskip

\section{Triangle tessellations and Triangle groups}\label{tg} 

\noindent Throughout the present section, let $S$ be anyone of $\mathbb E^2$, $\mathbb S^2$ or $\mathbb H^2$. A \textit{triangle tessellation} of $S$ is a symmetric tessellation $\mathcal T$ whose prototile is a triangle $T$. A \textit{triangle group} is a group $\Delta < \textnormal{Iso}(\,S)$ generated by reflections across the sides of the triangle $T$. As we shall see, each triangle group appears as the symmetry group $\textnormal{Sym}(\mathcal T)$ of a tessellation $\mathcal T$ of the Euclidean plane, the sphere, or the hyperbolic plane made of triangular tiles. Each of these triangle is a then a prototile, namely a fundamental domain for the action of $\textnormal{Sym}(\mathcal T)$. Good sources for triangle groups are Magnus \cite{MW} and Ratcliffe \cite{RJ}. 

\subsection{Triangular tiles}\label{ssec:tritiles} The simplest polygon that satisfies the side and angle conditions are quadrilaterals obtained by \textit{doubling} triangles. More precisely, for a triangle $T$, let $T'$ be triangle obtained by reflecting $T$ along one of its edges. By construction, $T$ and $T'$ have an edge in common; therefore the resulting shape $T\,\cup\, T'$ is a quadrilateral $Q$. We shall say that $Q$ is obtained by doubling $T$. Of course, $Q$ highly depends on which side we reflect $T$. 

\smallskip

\begin{defn}
A triangle $T$ has \textit{rational angles} if the interior angles have the form $\pi/p$, $\pi/q$ and $\pi/r$ for some positive integers $p,q,r\in\mathbb Z_{\ge2}$. It is customary to allow $\infty$ as a possible value of $p,q,r$, possibly all. A triangle $T$ is called $(p,q,r)$-triangle if it has rational angles $\pi/p$, $\pi/q$ and $\pi/r$. 
\end{defn} 

\begin{rmk}
Whenever at least one among these integers is $\infty$, the triangle $T$ is no longer a compact subset of $S$ and the corresponding vertex lies at "the infinity". A vertex at the infinity is often called \textit{ideal vertex} and we shall make the convention that $\frac{\pi}{\infty}=0$, that is every ideal vertex has magnitude $0$. Notice that this is never the case in spherical geometry because, being compact, any closed subset of $\mathbb S^2$ is compact.
\end{rmk}

\noindent A symmetric tessellation of $S$ having a quadrilateral as a prototile comes from a tessellation, say $\mathcal T$, generated by reflections in the sides of the triangle. More precisely, a quadrilateral tessellation can be seen as a tiling corresponding to an index two subgroup of $\textnormal{Sym}(\mathcal T)$, see Proposition \ref{prop:orpres}. The question of existence is settled by arguments which depend on the value of the angles $\pi/p$, $\pi/q$ and $\pi/r$. It is possible to show that, if a triangle $T$ is rational, a quadrilateral $Q$ obtained by doubling $T$ satisfies the side-angle conditions. The existence of a symmetric tessellation with $Q$ as a prototile is guaranteed by \ref{thm:pthm}. It is a classical fact in Euclidean geometry that the sum of inner angles of any triangle equals $\pi$. This means that an Euclidean triangle with rational angles $\pi/p$, $\pi/q$ and $\pi/r$ satisfies the equation
\begin{equation}\label{eq:innang}
    \frac{\pi}{p}+\frac{\pi}{q}+\frac{\pi}{r}=\pi \qquad \textnormal{ that is }\qquad \frac1p+\frac1q+\frac1r=1.
\end{equation}

\noindent On the other hand, recall once again that the sum of the inner angles of any spherical triangle is greater than $\pi$ and the sum of the inner angles of any hyperbolic triangle is lower than $\pi$. We have therefore similar relations as in \eqref{eq:innang}
\begin{align}
\frac{\pi}{p}+\frac{\pi}{q}+\frac{\pi}{r}>\pi \,\,\,\,\,\Longleftrightarrow\,\,\,\,\, \frac1p+\frac1q+\frac1r>1\\
\text{ } \notag\\
\frac{\pi}{p}+\frac{\pi}{q}+\frac{\pi}{r}<\pi \,\,\,\,\,\Longleftrightarrow\,\,\,\,\, \frac1p+\frac1q+\frac1r<1
\end{align}

\noindent The reader may notice that we have already seen a similar equation in \eqref{eq:sse}, \eqref{eq:sss} and \eqref{eq:ssh} in the very special case of $r=2$. Clearly, this is far from being a coincidence. In fact, a $q$-regular $p$-gon $\mathcal P$ can be subdivided into $2p$ triangles with inner angles $\pi/p$, $\pi/q$ and $\pi/2$. In other words, any regular tessellation seen above admits a sub-tessellation into right-angle triangles. Let us define the quantity
\begin{equation}
    \chi(p,q,r)= \frac1p+\frac1q+\frac1r-1\,\,.
\end{equation}
A triple $(p,q,r)$ is said to be \textit{spherical, Euclidean or hyperbolic} according as the quantity $\chi(p,q,r)$ is \textit{positive, zero or negative}. It is not hard to check that the spherical triples are $(2,3,3)$, $(2,3,4)$, $(2,3,5)$ and $(2,2,n)$ for $n\ge2$. In Section \S\ref{platsol} we shall see the relationship between these triples and the Schl\"afli symbols corresponding to spherical tessellations. The Euclidean triples are $(3,3,3)$, $(2,3,6)$, $(2,4,4)$ and a special one given by $(2,2,\infty)$. This latter triple corresponds to a tiling of $\mathbb E^2$ into infinite half-strips. These half-strips are non-compact triangles with two right-angles and one ideal vertex. The other triples, instead, correspond to tilings of $\mathbb E^2$ by compact triangles. For instance, the $(3,3,3)$ tiling results from a triangulation of $\mathbb E^2$ into equilateral triangles. The $(2,3,6)$ tiling also result from a triangulation of $\mathbb E^2$ into equilateral triangles each one divided into $6$ right-angle triangles. The same tiling arises by dividing e regular hexagon into $12$ right-angle triangles. Finally, $(2,4,4)$ arises from the square tessellation divided into $8$ isosceles right-angle triangles. All the other triples are hyperbolic. 

\smallskip

\begin{figure}[!ht]
  \centering
  \includegraphics[scale=0.185]{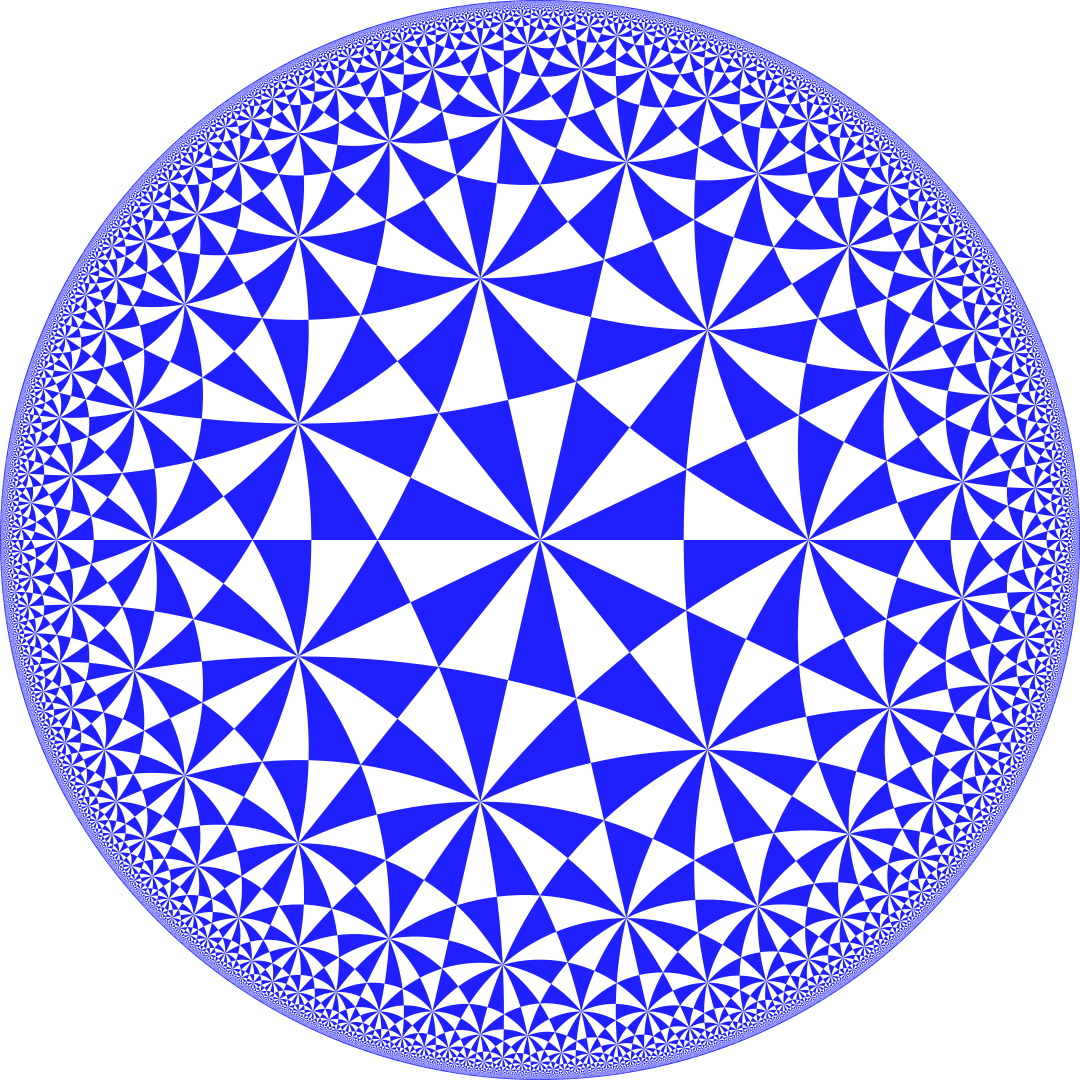}
  \caption{Triangular tessellation of the Poincar\'e disk where each tile is isometric to a hyperbolic triangle having inner angles of magnitude $\Big(\frac{\pi}{2},\,\frac{\pi}{3},\,\frac{\pi}{7}\Big)$}
  \label{tg237}
\end{figure}

\begin{ex}
The most important triple is surely the hyperbolic triple $(2,3,7)$. In $\mathbb D^2$, it is possible to place $336$ copies of the $(2,3,7)$-triangle in order to create a $7$-regular $14$-gon; that is a regular $14$-gon such that the magnitude of each inner angle is $\frac{2\pi}{7}$. According to Remark \ref{rem:color} above, we may coloring the triangles alternatively black and white. Two adjacent tiles with different colors provide a quadrilateral obtained by doubling the $(2,3,7)$ triangle. We can recognise $168$ such quadrilaterals that tile our $14$-gon. Label any side with $1$ and then label the others accordingly in positive cyclic order. If the side $2i + 1$ of such a polygon is identified with side $2i + 6$ $(\,\text{mod } 14\,)$, the result turns out to be a genus $3$ surface with two vertices. These two vertices correspond to the sets of even and odd vertices of our $14$-gon; it is easy to check that they both have angle sum $2\pi$. Thus, we have a genus $3$ surface equipped with a hyperbolic metric, that is a Riemannian metric with constant Gaussian curvature equal to $-1$. This surface provides the very first example, in several sense, of the so-called \textit{quasi-platonic} surfaces. We shall back on quasi-platonic surfaces in subsection \S\ref{ssec:qpsurf}.
\end{ex}

%\smallskip

\subsection{Triangle groups}\label{tg2} Let $(p,q,r)$ be any triple as above. We may associate to any such a triple a group, for the moment denoted by $W(p,q,r)$, generated by elements $-1,\,\delta_p,\,\delta_q,\,\delta_r$, with $\,-1$ being central in $W(p,q,r)$, and subject to the relations $(-1)^2=1$ and 
\begin{equation}
    \delta_p^p=\delta_q^q=\delta_r^r=\delta_p\delta_q\delta_r=-1,
\end{equation}
where, by convention, we set $\delta_{\infty}^{\infty}=-1$. A $(p,q,r)-$\textit{triangle group}, denoted by $\Delta(p,q,r)$, is defined as the center-free quotient $W(p,q,r)/\{\pm1\}$.

\begin{rmk}
It is worth mentioning that we may order the generators so that $p\le q\le r$ without loss of generality. In fact, one may observe that $\delta_p\delta_q\delta_r=-1$ is invariant under cyclic permutations so 
\begin{equation}
    W(p,q,r)\,\cong\, W(q,r,p)\,\cong\,W(r,p,q).
\end{equation} 
Moreover, a mapping that sends each generator to its inverse yields an isomorphism $W(p,q,r)\,\cong\,W(r,q,p)$. It is an easy matter that these isomorphisms all boil down to the quotient by factoring $\{\pm1\}\cong\Z_2$.
\end{rmk}

\noindent The name "triangle group" is now strongly motivated by the fact that these groups appear as the group of symmetries of some triangular tiling of $\mathbb E^2,\,\mathbb S^2$ or $\mathbb D^2$. More precisely, $W(p,q,r)$ appears as the full group of symmetries of the triangular tessellation $\mathcal T$ of $S$ having a $(p,q,r)-$triangle, say $T$, as the prototile. This is a consequence of \ref{thm:pthm}. Of course, $S$ is $\mathbb E^2,\,\mathbb S^2$ or $\mathbb D^2$ depending on whether the triple is Euclidean, spherical or hyperbolic. As a consequence, its quotient $\Delta(p,q,r)$ appears as the the full group of symmetries of the symmetric tiling of $S$ having the quadrilateral obtained by doubling $T$ as a prototile. We may observe that $\Delta(p,q,r)$ is an index two subgroup of $W(p,q,r)$ and it comprises all and only orientation-preserving symmetries of $\mathcal T$, compare with Proposition \ref{prop:orpres}. In principle, one might be tempted to define a triangle group as the group $W(p,q,r)$ rather than its quotient $\Delta(p,q,r)$. However, as it is more convenient to work with groups of symmetries that preserve the orientation, it is customary to define its center-free quotient $\Delta(p,q,r)$ as triangle group.

\medskip

\noindent Let us provide a few examples for the reader convenience. In fact, it turns out that some triangle groups are isomorphic to other well-known groups. Generally, one may notice that, if $p,q\ge2$ and $r=\infty$, then there is a canonical isomorphism $\Delta(p,q,\infty)\,\cong\,\mathbb Z_p\,*\,\mathbb Z_q$. This simple observation leads to some interesting examples as follows.

\begin{figure}[ht!] 
\centering
\begin{tikzpicture}[scale=0.95, every node/.style={scale=0.75}]
\definecolor{pallido}{RGB}{221,227,227}

\draw[black, thick] (-5,0)--(5,0);
\draw[black, thin, dashed] (-4,-3)--(-4,3);
\draw[black, thin, dashed] (-2,-3)--(-2,3);
\draw[black, thin, dashed] (0,-3)--(0,3);
\draw[black, thin, dashed] (2,-3)--(2,3);
\draw[black, thin, dashed] (4,-3)--(4,3);

\pattern [pattern=north east lines, pattern color=pallido] (-4,0)--(-2,0)--(-2,3)--(-4,3)--(-4,0);
\pattern [pattern=north east lines, pattern color=pallido] (0,0)--(2,0)--(2,3)--(0,3)--(0,0);
\pattern [pattern=north east lines, pattern color=pallido] (4,0)--(5,0)--(5,3)--(4,3)--(4,0);
\pattern [pattern=north east lines, pattern color=pallido] (0,0)--(-2,0)--(-2,-3)--(0,-3)--(0,0);
\pattern [pattern=north east lines, pattern color=pallido] (4,0)--(2,0)--(2,-3)--(4,-3)--(4,0);
\pattern [pattern=north east lines, pattern color=pallido] (-4,0)--(-5,0)--(-5,-3)--(-4,-3)--(-4,0);

\node at (-0.25,0.25) {$O$};
\node at (2.25,0.25) {$O'$};

\fill [black] (2,0) circle (1.5pt);
\fill [black] (0,0) circle (1.5pt);

\draw[black, thin, ->] (-1.85,1.5)--(1.85,1.5);

\node at (-1,1.75) {$r_O\,r_{O'}$};
    
\end{tikzpicture}
\caption{$\Delta(2,2,\infty)$ is generated by two rotations, say $r_O$ and $r_{O'}$ of order $2$ about two distinct points $O$ and $O'$. It is an easy matter to check that $r_O\,r_{O'}$ is a translation of translation length equal to $2d_{\mathbb E^2}(O,\,O')$. Any shadow region is a fundamental domain for $W(2,2,\infty)$ whereas any infinite strip is a fundamental domain for $\Delta(2,2,\infty)$.}
\label{fig:tri22inf}
\end{figure}
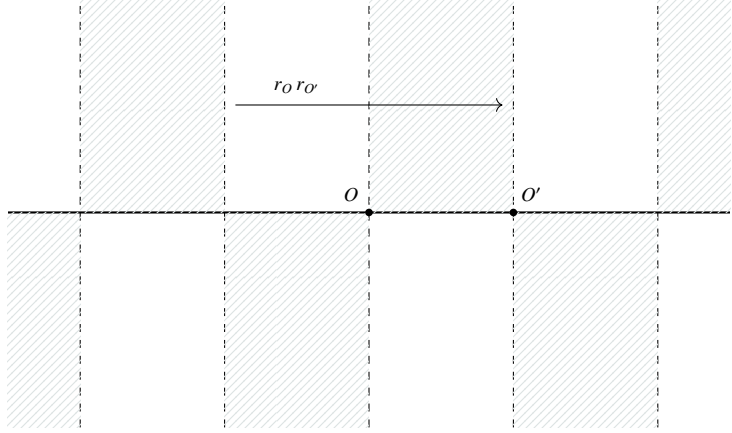

\begin{ex}
In the very particular case $p=q=2$, the triple $(2,2,\infty)$ is Euclidean and $\Delta(2,2,\infty)\cong\Z_2*\Z_2$ can be geometrically realised as the group of isometries of $\mathbb E^2$ generated by two rotations about two distinct points. The element of infinite order correspond to a translation along the unique straight line passing through these points with translation length equal to twice of their distance. See Figure \eqref{fig:tri22inf}.
\end{ex}

\begin{ex}
For the spherical triples $(2,3,3)$, $(2,3,4)$ and $(2,3,5)$ we have the following isomorphisms
\begin{equation}
    \Delta(2,3,3)\cong A_4,\qquad \Delta(2,3,4)\cong \mathfrak S_4, \qquad \Delta(2,3,5)\cong A_5.
\end{equation}
As we shall see, they correspond to orientation-preserving symmetry groups of Platonic solids, see \S\ref{platsol}.
\end{ex}

\begin{ex}
Another example which is worth of mentioning is that of $\Delta(2,3,\infty)\cong\Z_2*\Z_3\cong\text{PSL}(2,\Z)$, the modular group. More precisely, $\text{PSL}(2,\Z)$ is well-known to be generated by the matrices 
\begin{equation}
    S=
    \begin{pmatrix}
    0 & -1\\
    1 &  0
    \end{pmatrix}\,\,\,\,
    \text{ and }\,\,\,\,
    T=
    \begin{pmatrix}
    1 & 1\\
    0 &  1
    \end{pmatrix}.
\end{equation}
\noindent It is straightforward to check that $S^2=1$ whereas $T$ has infinite order. Moreover, their product $U=ST$ is a transformation of order $3$. Therefore, $\text{PSL}(2,\Z)$ admits a finite presentation
\begin{equation}
    \text{PSL}(2,\Z)=\langle \,S,\,U\,\,\,|\,\,\, S^2=I,\,\,U^3=I \,\rangle \cong \Z_2\,*\,\Z_3. 
\end{equation}
The mapping $\phi$ that associates $S\longmapsto\delta_2$, $U\longmapsto\delta_3$ and $T\longmapsto\delta_{\infty}$ naturally extends to the desired isomorphism $\phi:\text{PSL}(2,\Z)\longrightarrow \Delta(2,3,\infty)$. Let us consider the group $W(2,3,\infty)$ for a moment. A fundamental domain for this latter is a triangle $T\subset \mathbb H^2$ with vertices at $\zeta_4=i$, $\zeta_3=e^{\frac{2\pi\,i}{3}}$ and at $\infty$. We double $T$ by taking the triangle $T'$ as the mirror image of $T$ in the imaginary axis. Then region $F$ defined as
\begin{equation}
    F=T\cup T'=\left\{ z\in\mathbb H\,\,\big|\,\, -\frac12\le \Re(z)\le \frac12,\,\,|z|\ge1 \right\}
\end{equation}
turns out to be a fundamental region for $\Delta(2,3,\infty)$. The quotient space $\mathcal M_1=\mathbb H\,/\,\Delta(2,3,\infty)$ is of remarkable importance and it corresponds to the moduli space of Elliptic curves, see \S\ref{ssec:ellcur}.
\end{ex}

\smallskip

\begin{ex}
For $q\ge3$, the \textit{Hecke group}, denoted by $H_q$, is defined as the subgroup of $\pslr$ generated by
\begin{equation}
    S:\,z\longmapsto -\frac1z \,\,\text{ and }\,\, U_q:\,z\longmapsto z+2\cos\frac{\pi}{q}.
\end{equation}
For $q=3$, the group $H_3$ is isomorphic to $\text{PSL}(2,\Z)$ and hence isomorphic to $\Delta(2,3,\infty)$. More generally, one has $H_q\cong \Z_2\,*\,\Z_q$ and hence, by the same argument above, $H_q\cong \Delta(2,q,\infty)$.
\end{ex}

\smallskip

\begin{ex}
Another group which is worth of interest is $\Delta(\infty,\infty,\infty)\cong \Z*\Z$. It is a classical fact in hyperbolic geometry that two triangles are isometric as soon as they have same inner angles. In particular two ideal triangles, \textit{i.e.} a triangle with all the vertices at the infinity, are always isometric. As a direct consequence, we can deduce that any ideal triangle in $\mathbb H^2$ is a fundamental domain for $\Delta(\infty, \infty, \infty)$. See Figure \eqref{tginf}. This triangle group correspond to the principal congruence subgroup $\Gamma(2)<\text{PSL}(2,\mathbb Z)$ defined as
\begin{equation}
    \Gamma(2)=\left\{ \,\,g\in\text{PSL}(2,\mathbb Z)\,\,\big|\,\, g\equiv \pm\,\text{Id}\,\,(\text{mod}\,2)\,\,\right\}.
\end{equation}
The reader may observe that $\Gamma(2)$ is a normal subgroup of index six in the modular group $\text{PSL}(2,\mathbb Z)$ with the quotient group being $\text{PSL}(2,\mathbb Z)\,/\,\Gamma(2)\cong\mathfrak{S}_3$.
\begin{figure}[!ht]
  \centering
  \includegraphics[scale=0.325]{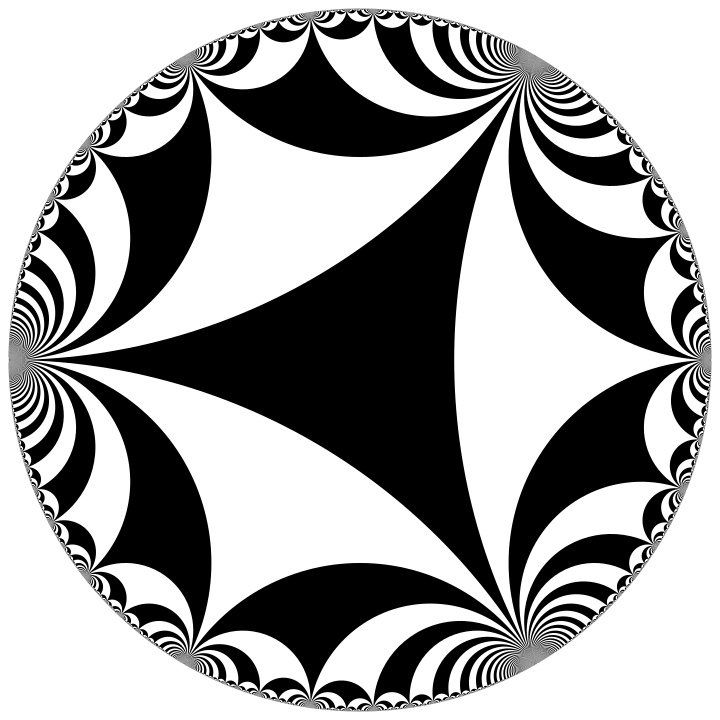}
  \caption{Triangular tessellation of the Poincar\'e disk where each tile is isometric to an ideal triangle. The corresponding triangle group is $\Delta(\infty, \infty, \infty)$.}
  \label{tginf}
\end{figure}
\end{ex}

\smallskip

\noindent We finally focus on triangle groups $\Delta(p,q,r)$ of hyperbolic type, \textit{i.e.} $\frac1p+\frac1q+\frac1r<1$. We have the following

\begin{thm}\label{thm:tribihospher}
Let $\Delta(p,q,r)$ be a triangle group of hyperbolic type. Then $\Delta(p,q,r)$ acts properly discontinuously on $\mathbb H^2$ and the quotient $\mathbb H^2/\Delta(p,q,r)$ is biholomorphic to the Riemann sphere $\rs$. More precisely, there is a meromorphic function $j:\mathbb H^2\to \rs$ which is $\Delta(p,q,r)$-invariant, namely it satisfies $j\big(g(z)\big)=j(z)$ for all $z\in\mathbb H^2$ and all $g\in\Delta(p,q,r)$. If $T$ is a fundamental domain for $W(p,q,r)$ and $F=T\cup T'$ is a fundamental domain for $\Delta(p,q,r)$, then $j$ maps the interior of $T$ and $T'$ biholomorphically onto $\mathbb H^2$ and $-\mathbb H^2$ and the edges onto $\mathbb R\mathbf{P}^1$ and the vertices onto $0,1,\infty$ with multiplicities $p,q,r$.
\end{thm}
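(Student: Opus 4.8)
The plan is to build the map $j$ by classical uniformization-style arguments, combining three ingredients: Poincaré's Theorem (already established), the Riemann mapping theorem for hyperbolic triangles, and the Schwarz reflection principle. First I would record the group-theoretic setup: by Poincaré's Theorem \ref{thm:pthm} applied to the quadrilateral $Q=T\cup T'$ obtained by doubling a $(p,q,r)$-triangle $T$, the group $W(p,q,r)$ acts as the full symmetry group of the triangular tessellation $\mathcal T$ of $\mathbb H^2$, hence acts properly discontinuously (Proposition \ref{prop:discact}); the index-two orientation-preserving subgroup is exactly $\Delta(p,q,r)$, with fundamental domain $F=T\cup T'$. This immediately gives the first assertion, that $\Delta(p,q,r)$ acts properly discontinuously on $\mathbb H^2$.

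Next I would construct $j$ on a single triangle. Realize $T$ as a hyperbolic triangle whose three sides are arcs of circles/lines orthogonal to $\partial\mathbb H^2$, with interior angles $\pi/p,\pi/q,\pi/r$. By the Riemann mapping theorem, the interior of $T$ is biholomorphic to the open upper half-plane; moreover one can normalize the Riemann map $\varphi:\operatorname{int}(T)\to\mathbb H^2$ so that the three vertices go to $\infty,0,1$ respectively (Carathéodory's theorem gives a homeomorphic extension to the closed triangle sending the three boundary arcs to the three segments of $\mathbb R\mathbf P^1$ cut out by $0,1,\infty$). Then I would extend $\varphi$ across each side by Schwarz reflection: reflecting $T$ across a side $s$ produces the adjacent triangle $T'$ (the mirror image), and the analytic continuation of $\varphi$ over $s$ is the composition of $\varphi$ with the anticonformal reflection, which maps $\operatorname{int}(T')$ biholomorphically onto the lower half-plane $-\mathbb H^2=\{\operatorname{Im}<0\}$ sending the edges to $\mathbb R\mathbf P^1$. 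Iterating reflections across all sides of all triangles of $\mathcal T$ extends $\varphi$ to a holomorphic (in fact meromorphic, because of the pole-type behavior at the vertices over $\infty$) function $j$ on all of $\mathbb H^2$; the extension is single-valued precisely because the angle condition $\theta_1+\cdots+\theta_k=2\pi/p'$ makes the monodromy around each vertex trivial — this is the same computation carried out in the proof of Proposition \ref{prop:scomplete} (equation \eqref{eq:genform} and its iteration), now read on the $j$-side as saying that $p'$ successive reflections return to the identity germ. By construction $j(g(z))=j(z)$ for every side-pairing reflection, hence for all of $W(p,q,r)$, in particular for all $g\in\Delta(p,q,r)$.

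Finally I would identify the quotient. The invariance means $j$ factors through $\mathbb H^2/\Delta(p,q,r)$ to give a holomorphic map $\bar j:\mathbb H^2/\Delta(p,q,r)\to\rs$. Since $j$ maps $\operatorname{int}(T)$ biholomorphically onto $\mathbb H^2$ and $\operatorname{int}(T')$ biholomorphically onto $-\mathbb H^2$, and a full fundamental domain $F$ for $\Delta(p,q,r)$ is $T\cup T'$, the map $\bar j$ is a bijection off the images of the edges and vertices; a routine argument using the open mapping theorem and the description of the boundary behavior shows $\bar j$ is a biholomorphism onto $\rs$. The local behavior at the three vertices — $j$ is $z\mapsto z^{p}$ in suitable local coordinates, etc. — gives the stated ramification: $j$ takes the values $0,1,\infty$ with multiplicities $p,q,r$ at the (images of the) three vertex-orbits, matching the orders of the elliptic elements of $\Delta(p,q,r)$.

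The main obstacle I anticipate is proving that the reflected continuation is globally single-valued and that the resulting $j$ is genuinely meromorphic (including at the vertex-orbits, where one must check the local model $z\mapsto z^{p}$ rather than an essential singularity or a branch point) — in other words, transferring the cycle-closure computation of Proposition \ref{prop:scomplete} into a statement about the monodromy of the analytic continuation of $\varphi$. A secondary technical point is the boundary regularity of the Riemann map on a hyperbolic triangle with possibly ideal vertices (when some of $p,q,r$ equal $\infty$), which is handled by standard results on conformal maps of Jordan domains together with an explicit local model (a horocyclic neighborhood of an ideal vertex maps to a neighborhood of $\infty$ via something like $z\mapsto e^{2\pi i z}$ up to normalization); once these local pictures are in place the global argument is just bookkeeping with the tessellation $\mathcal T$.
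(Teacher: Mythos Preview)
Your proposal is correct and follows essentially the same route as the paper: proper discontinuity via Poincar\'e's Theorem, Riemann mapping on $T$ with Carath\'eodory boundary extension normalized to send the vertices to $0,1,\infty$, Schwarz reflection to $T'$ and then iteratively to all tiles, and finally a removable-singularity argument at the vertex orbits. If anything you have been more careful than the paper, which leaves the single-valuedness of the iterated reflection and the local vertex models implicit; your linking of the monodromy check to the cycle computation in Proposition~\ref{prop:scomplete} and your explicit $z\mapsto z^p$ local model are exactly the details one would want to fill in.
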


\begin{proof}
We begin by observing that $\Delta(p,q,r)$ acts on $\mathbb H^2$ by M\"obius transformations. A triangle $T\subset \mathbb H^2$ with inner angles $\pi/r$, $\pi/q$ and $\pi/r$ satisfies the side angle conditions and hence, by \ref{thm:pthm}, it turns out to be a fundamental domain for a symmetric tessellation and the group $W(p,q,r)$ generated by side-pairing transformations is its group of symmetries. Moreover, the action of $W(p,q,r)$ is properly discontinuous by Proposition \ref{prop:discact}. By doubling $T$, as in subsection \S\ref{ssec:tritiles}, we obtain a quadrilateral which is a fundamental domain for the action of the index two subgroup $\Delta(p,q,r)$. As the action of $W(p,q,r)$ is properly discontinuous the same holds for any proper subgroup.

\smallskip

\noindent We next define the mapping $j:\mathbb H^2\longrightarrow \rs$ as follows. We invoke the Riemann mapping theorem to claim the existence of a mapping $j$ from the interior of $T$ to $\mathbb H^2$. A much stronger version asserts that $j$ extends continuously on the boundary and normalised so that the vertices of $T$ are mapped to $0,1,\infty$. Then $j$ extends to $T'$ as a consequence of Schwarz's reflection principle. It is possible to iterate this process by applying the same idea on all boundary lines of all images $g(F)$ where $F=T\,\cup\,T'$ and $g\in\Delta(p,q,r)$. We then obtain a well-defined holomorphic function on $\mathbb H$ at least outside the $\Delta$-orbits of the vertices of $F$. For these points, we finally apply Riemann's removable singularity. We finally notice that the mapping $j:\mathbb H^2\longrightarrow \rs$ is $\Delta(p,q,r)$-invariant by construction.
\end{proof}

\subsection{Inclusions of triangle groups}

\noindent In this subsection we still consider triangle groups of hyperbolic type. A \textit{Fuchsian} group $\Gamma$ is a discrete subgroup of $\pslr$. By definition, any triangle group is a Fuchsian group however the opposite is far from being true. There are, in fact, plenty Fuchsian group which are not triangle groups. A triangle group $\Delta(p,q,r)$ is \textit{maximal} if it cannot be properly embedded in any Fuchsian group. A result by Singerman \cite{SD} and Greenberg \cite[Theorem 3B]{GL}, any Fuchsian group containing a triangle group is itself a triangle group. Singerman in \cite{SD} lists the normal and non-normal inclusions between triangle groups.  These are given by concatenations as described by the following

%\noindent By Theorem \ref{thm:tribihospher} above, a hyperbolic triangle group $\Delta(p,q,r)$ acts properly discontinuously on $\mathbb H^2$ and the quotient is biholomorphic to $\rs$. For a hyperbolic triangle group $\Delta(p,q,r)$ we define its \textit{volume} as\begin{equation} \textnormal{vol}\Big(\, \mathbb H^2/\Delta(p,q,r)\,\Big)=-\chi(p,q,r)>0. \end{equation}

\begin{thm}\label{thm:incltrigroup}
The normal inclusions between hyperbolic triangle groups have the forms
\begin{itemize}
    \item[1.] $\Delta(s,s,t) \lhd_2 \Delta(2,s,2t)$, where $(s-2)(t-1)>2$ and the quotient group is $\Z_2$,
    \item[2.] $\Delta(t,t,t) \lhd_3 \Delta(3,3,t)$, where $t>3$ and the quotient group is $\Z_3$, and
    \item[3.] $\Delta(t,t,t) \lhd_6 \Delta(2,3,2t)$, where $t>3$ and the quotient group is $\mathfrak S_3$.
\end{itemize}
\smallskip
The non-normal inclusions between hyperbolic triangle groups have the forms
\begin{itemize}
    \item[4.] $\Delta(2,7,7) <_{9} \Delta(2,3,7)$,
    \item[5.] $\Delta(3,8,8) <_{10} \Delta (2,3,8)$,
    \item[6.] $\Delta(4,4,5) <_{6} \Delta (2,4,5)$,
    \item[7.] $\Delta(3,3,7) <_{8} \Delta (2,3,7)$,
    \item[8.] $\Delta(t,2t,2t) <_{4} \Delta (2,4,2t)$, with $t\ge3$,
    \item[9.] $\Delta(2,t,2t) <_3 \Delta (2,3,2t)$ with $t\ge4$, and
    \item[10.] $\Delta(3,t,3t) <_{4} \Delta (2,3,3t)$ with $t\ge 3$.
\end{itemize}
where $H \lhd_n G$ means an index $n$ normal subgroup of $G$ and $<_n$ simply means an index $n$ subgroup of $G$.
\end{thm}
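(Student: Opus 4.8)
The plan is to convert each inclusion of triangle groups into a covering of orbifolds and then run a finite case analysis controlled by the Riemann--Hurwitz formula. By the theorem of Greenberg and Singerman quoted above, a Fuchsian group containing a triangle group is itself a triangle group, so it suffices to classify the proper inclusions $\Delta(p_1,q_1,r_1)\le\Delta(p_2,q_2,r_2)$ of hyperbolic type; since any chain of such inclusions factors through inclusions admitting no intermediate triangle group, one really needs only these ``minimal'' ones, the families of the statement being assembled from them (thus $(3)$ is the composite of $(2)$ and the case $s=3$ of $(1)$, which explains the quotient $\mathfrak S_3$). Now by Theorem \ref{thm:tribihospher} the quotient $\mathbb H^2/\Delta(p,q,r)$ is a sphere carrying exactly three cone points, of orders $p,q,r$; call this orbifold $\mathcal O(p,q,r)$. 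An index-$n$ inclusion $\Delta_1\le\Delta_2$ corresponds to a degree-$n$ orbifold covering $\mathcal O_1\to\mathcal O_2$, equivalently to the coset homomorphism $\rho\colon\Delta_2\to\mathfrak S_n$: writing $x,y,z$ for the standard rotation generators of $\Delta_2=\Delta(P,Q,R)$ (so that $x^P=y^Q=z^R=xyz=1$), the permutations $\rho(x),\rho(y),\rho(z)$ have orders dividing $P,Q,R$, have product the identity, and generate a transitive subgroup, and conversely any such triple realises an index-$n$ subgroup whose signature can be read off from the cycle types.

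The numerical constraints come out as follows. The covering $\mathcal O_1\to\mathcal O_2$ branches only over the three cone points of $\mathcal O_2$, and over the one of order $m$ the ramification indices are the cycle lengths $\ell_1,\dots,\ell_s$ of the relevant permutation; these divide $m$, sum to $n$, and the points above it have orders $m/\ell_i$ (smooth precisely when $\ell_i=m$). Letting $s_j$ be the number of preimages of the $j$-th cone point of $\mathcal O_2$ and $u_j$ the number of those that are smooth, Riemann--Hurwitz for the underlying branched cover of the sphere gives $2-2g_1=2n-\sum_{j=1}^{3}(n-s_j)$, so requiring $\mathcal O_1$ to again be a triangle orbifold --- genus $0$ with exactly three cone points --- forces $s_1+s_2+s_3=n+2$ and $u_1+u_2+u_3=n-1$. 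Since $u_1\le\lfloor n/P\rfloor$, $u_2\le\lfloor n/Q\rfloor$, $u_3\le\lfloor n/R\rfloor$, this yields $n-1\le n(1/P+1/Q+1/R)$, hence $n\le 1/|\chi(P,Q,R)|$, which bounds $n$ once the base triple is fixed; feeding this back into $u_1+u_2+u_3=n-1$ together with $1/P+1/Q+1/R<1$ rules out every base triple except those of the shapes occurring in the statement (allowing one entry to be a free parameter $t$). The area identity $\chi(p_1,q_1,r_1)=n\,\chi(p_2,q_2,r_2)$ then pins down the cover triple once the branch data $\{\ell_i\}$ is chosen.

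Running through the finitely many admissible branch data --- organised by which entries of $(P,Q,R)$ equal $2$ or $3$ and which is the parameter $t$ --- one recovers exactly the ten shapes listed, and the side conditions $(s-2)(t-1)>2$, $t>3$, $t\ge3$, $t\ge4$ are precisely the requirements that \emph{both} triangle groups be hyperbolic (for instance $\tfrac12+\tfrac1s+\tfrac1{2t}<1\iff(s-2)(t-1)>2$). For each surviving candidate one must still \emph{construct} the covering; by Riemann's existence theorem this reduces to exhibiting permutations of the prescribed cycle types in $\mathfrak S_n$ with trivial product generating a transitive group --- a uniform construction in the families $(1)$--$(3)$ and $(8)$--$(10)$ (e.g.\ $(1)$ is the index-two kernel of $\Delta(2,s,2t)\to\Z_2$) and a short explicit verification in each of the sporadic cases $(4)$--$(7)$. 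Finally the inclusion $\Delta_1\le\Delta_2$ is normal exactly when the coset action $\rho$ has kernel $\Delta_1$, i.e.\ $|\rho(\Delta_2)|=n$, in which case $\Delta_2/\Delta_1\cong\rho(\Delta_2)$ is the quotient group named in the statement ($\Z_2$, $\Z_3$ or $\mathfrak S_3$), while for $(4)$--$(10)$ one checks $\Delta_2$ has no normal subgroup of the relevant index.

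I expect the main obstacle to be this last step: proving the list of branch data is \emph{complete} --- that no family and no sporadic case has been overlooked --- and that each entry is genuinely realised. The bounds above make this a finite check, but it is delicate: $n$ can be as large as $42$ for the base $(2,3,7)$, several near-misses must be excluded by combining the divisibility $\ell_i\mid m$ with transitivity of $\rho$, and deciding normality requires identifying the monodromy image $\rho(\Delta_2)$ in each case.
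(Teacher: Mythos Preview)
The paper does not supply its own proof of this theorem: it is quoted as a result of Singerman \cite{SD} (together with the Greenberg--Singerman fact that any overgroup of a triangle group is again a triangle group), and the surrounding discussion only explains how to read the inclusions geometrically in terms of tilings. So there is nothing to compare your argument against in the paper itself.

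That said, your plan is exactly the classical one carried out in \cite{SD}: translate an index-$n$ inclusion into a degree-$n$ branched cover of $(0;P,Q,R)$-orbifolds, impose that the cover again has signature $(0;p_1,q_1,r_1)$, and let Riemann--Hurwitz force the numerical constraints $s_1+s_2+s_3=n+2$, $u_1+u_2+u_3=n-1$, together with the area identity. Your bound $n\le 1/|\chi(P,Q,R)|$ is the right organising principle for the finite search. The part you flag as the obstacle is indeed where all the work lies; in Singerman's paper the exhaustive case analysis is written out signature by signature, and the realisation step is done by explicitly exhibiting the coset permutations. One small correction: your remark that $n$ ``can be as large as $42$'' is off --- for the base $(2,3,7)$ one has $1/|\chi|=42$, but the constraint $u_1+u_2+u_3=n-1$ with $u_1\le n/2$, $u_2\le n/3$, $u_3\le n/7$ already forces $n\le 9$ once you take floors, which is why only indices $8$ and $9$ appear over $(2,3,7)$ in the list. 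Tightening the bound in this way at the outset cuts the search down considerably.
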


\noindent Lower values of the parameters $s,\,t$ correspond to inclusions between euclidean or spherical triangle groups. We conclude by mentioning that the list above is a little redundant because, as the reader may easily check, for $s,t>3$ the third inclusion is a combination of the first and the second. There are some other cases which we have not listed above because they can be deduced by combinations of the inclusions above. These are all listed in \cite[Theorem 3.12]{JW}, compare with \cite[Section \S2]{CV}, and correspond to the four inclusions $\Delta(7,7,7) <_{24} (2,3,7)$, $\Delta(4,8,8)<_{12} \Delta(2,3,8)$, $\Delta(9,9,9) <_{12} \Delta(2,3,9)$ and $\Delta(t,4t,4t) <_6 \Delta(2,3,4t)$ with $t\ge2$.

\smallskip

\noindent \textit{How do we interpret Theorem \ref{thm:incltrigroup} in terms of tessellations?} In order to provide a satisfactory answer, in the first place we recall the following fact from which Proposition \ref{prop:orpres} turns out a special case. Let $\Gamma$ be a group acting properly discontinuously on $\mathbb H^2$ and let $\Gamma'$ be a finite index subgroup with $n$ distinct cosets, say $\Gamma'\gamma_1,\dots,\Gamma'\gamma_n$. If $\mathcal P$ is a fundamental domain for $\Gamma$, then $\mathcal P'=\gamma_1\cdot\mathcal P\,\cup\,\cdots\,\cup\,\gamma_n\cdot\mathcal P$ is a fundamental domain for $\Gamma'$. In particular, it is possible to choose $\gamma_1,\dots,\gamma_n$ so that $\mathcal P'$ turns out a connected polygon. Since $\Gamma$ is any group acting properly discontinuously on $S$; this argument applies in particular to any triangle group $\Delta(p,q,r)$ acting on $\mathbb H^2$. Any subgroup of finite index in $\Delta(p,q,r)$ correspond to a tiling of $\mathbb H$ whose prototile is a polygon obtained as the union of $n$ congruent and adjacent quadrilaterals each of which is a fundamental domain for $\Delta(p,q,r)$.

\begin{ex}[An easy example]

\begin{figure}[!ht]
\centering
\subfloat[][Symmetric tiling associated to the triangle group $\Delta(2,4,6)$]
   {\includegraphics[width=0.425\textwidth]{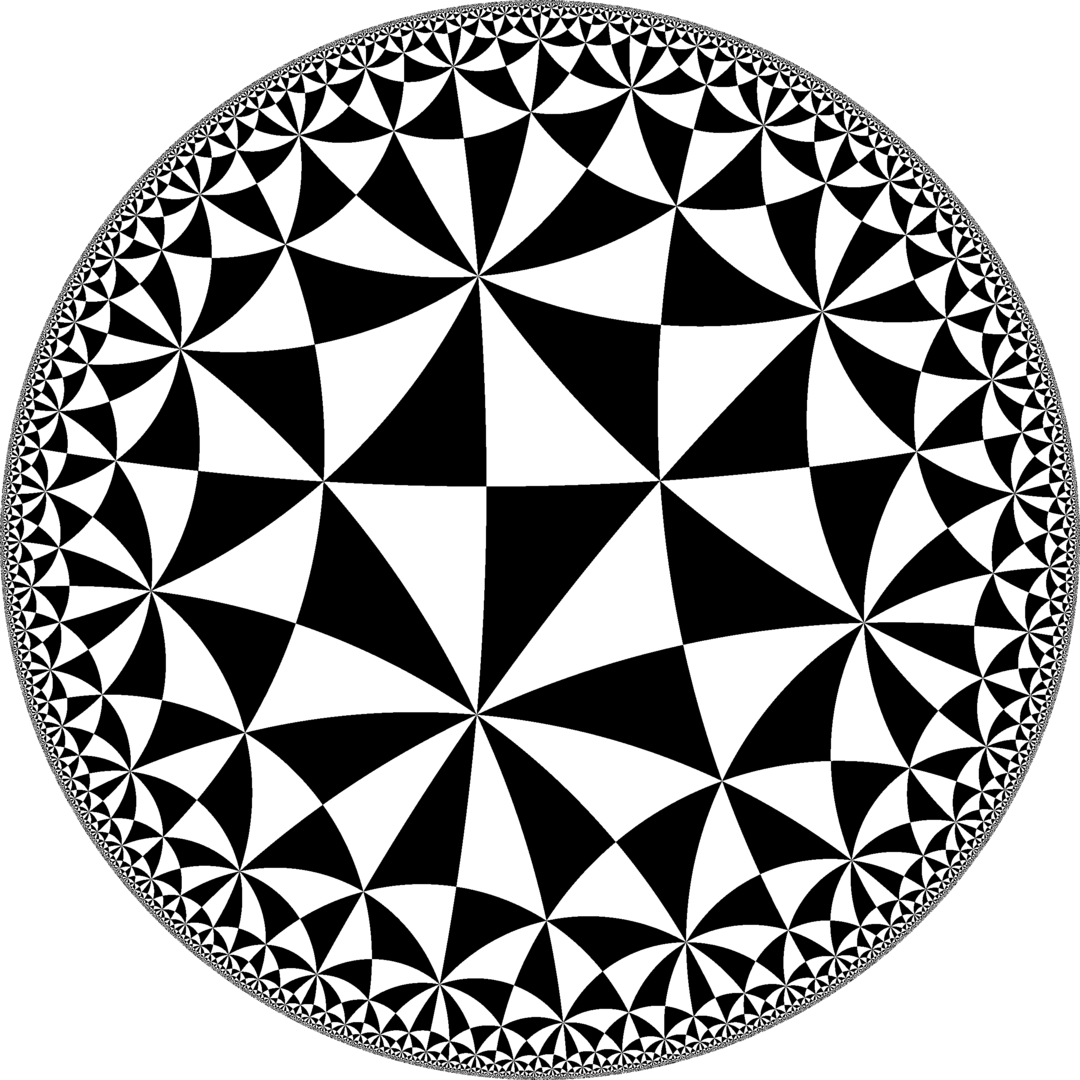}} \quad
\subfloat[][Symmetric tiling associated to the triangle group $\Delta(3,4,4)$]
   {\includegraphics[width=0.425\textwidth]{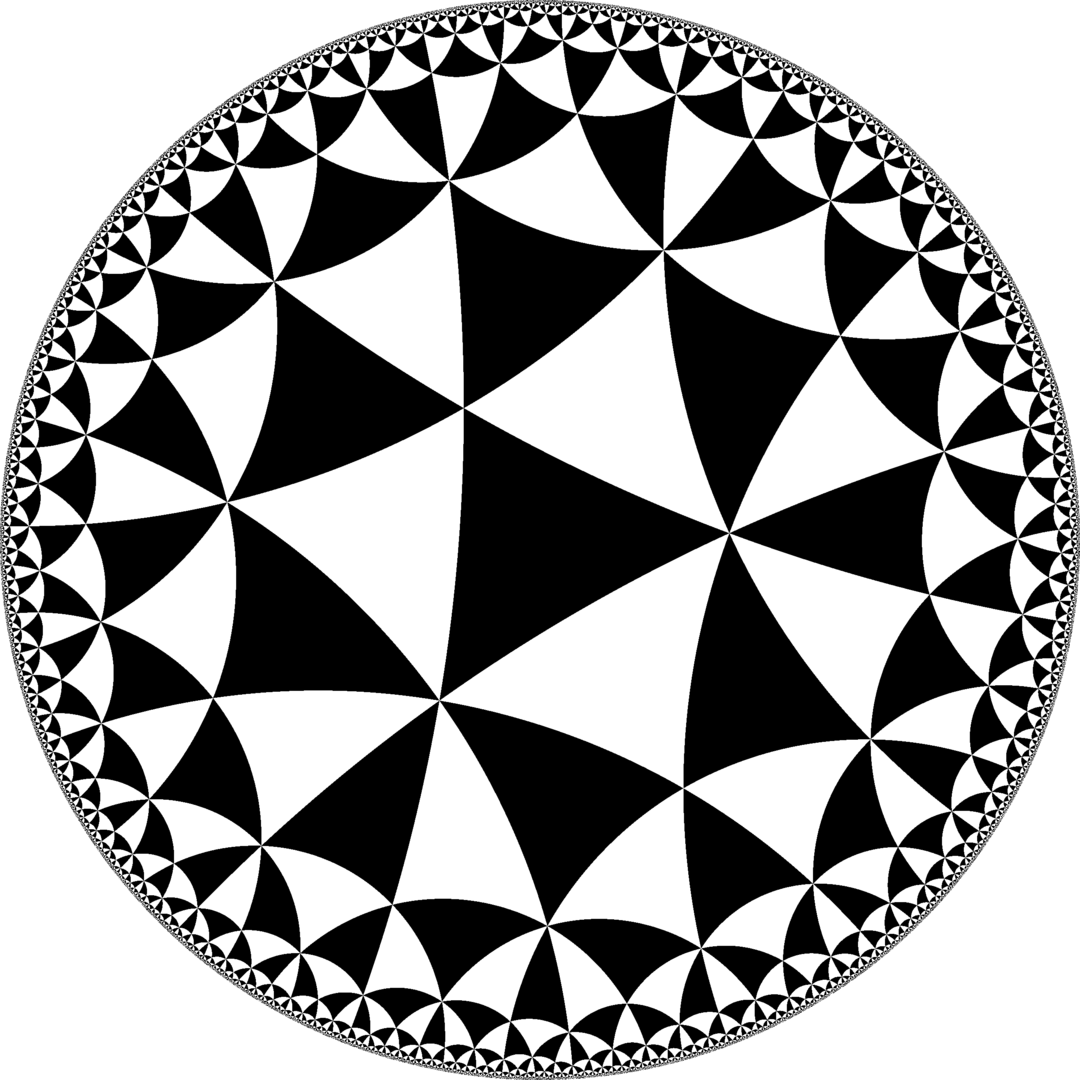}} \\
\caption{On the left the reader can see the symmetric tiling of $\mathbb D$ whose prototile is a hyperbolic triangle with inner angles $\pi/2$, $\pi/4$ and $\pi/6$. On the right the reader can see the symmetric tiling of $\mathbb D$ whose prototile is a hyperbolic triangle with inner angles $\pi/3$, $\pi/4$ and $\pi/4$. The reader is warmly invited to subdivide the tiling on the right in order reproduce the tiling on the left.}
\label{fig:comparison}
\end{figure}

\noindent Consider the triangle group $\Delta(2,4,6)$. It is an easy matter to check that it is of hyperbolic type. A fundamental domain for $W(2,4,6)$-action on $\mathbb H^2$ is a hyperbolic triangle, say $T_1$, with inner angles $\pi/2$, $\pi/4$ and $\pi/6$. Therefore, as a consequence of Proposition \ref{prop:orpres}, a fundamental domain for $\Delta(2,4,6)$ is a quadrilateral obtained by doubling $T_1$. We double $T_1$ along the edge having the vertices with angle $\pi/2$ and $\pi/6$ as extremal points. The resulting quadrilateral is degenerate because it has a vertex with angle $\pi$. In other words, we double $T_1$ so that the resulting shape is a triangle, say $T_2$, having $\pi/4$, $\pi/4$ and $\pi/3$ as inner angles. Clearly, this is a fundamental domain for the action of $W(3,4,4)$ and we finally double $T_2$ in order to obtain a quadrilateral $Q$ as the fundamental domain for $\Delta(3,4,4)$. More precisely, we double $T_2$ along the unique edge having the vertices with angles $\pi/4$ as the extremal points. The reader may now observe such a quadrilateral contains four copies of $T_1$ and admits a symmetry of order two given by a rotation of angle $\pi$ about its center. In particular, the symmetric tiling having $Q$ as a prototile is a sub tessellation of the symmetric tiling having $T_2$ as a prototile. See Figure \eqref{fig:comparison}.
\end{ex}

%Let $\Delta(p,q,r)$ be a triangle group of hyperbolic type and let $\Delta(r,s,t)$ be a normal subgroup of $\Delta(p,q,r)$. 

\section{Platonic solids}\label{platsol}

\noindent In this section we focus on regular tessellations of the sphere and on spherical triangle groups. In subsection \S\ref{ssec:regtess} we have already counted the possible regular tessellations of the sphere. However, our previous approach relies on a standard fact of spherical geometry, namely that the sum of the interior angles of a $p\,-\,$polygon is always greater than $(p-2)\pi$. In fact, the sum of the inner angles of any spherical triangle is bigger than $\pi$ and then, by reasoning as in the Euclidean case, we obtain the claimed lower bound. Our aim here is to provide a more topological approach to show how these tessellations are related with the classical Platonic solids.

\smallskip

\subsection{How many Platonic solids?} The Platonic solids have been known since antiquity and Greeks studied them extensively. Theaetetus was the first to give a mathematical description of Platonic solids and he may have been responsible for the first known proof that no \textit{convex regular polyhedra} exist other than the tetrahedron, cube, octahedron, dodecahedron and icosahedron. With regular here we mean a polyhedron such that its faces are all congruent to a regular $p\,-\,$polygon. A topological proof of this result is based on the following remark known as

\begin{ethm}
For a given convex polyhedron, \textit{i.e.} homeomorphic to a sphere, let $V,\,E,\,F$ be the number of vertices, edges and faces respectively. Then $V\,-\,E\,+\,F\,=\,2$.
\end{ethm}

\noindent There are several proofs of this classic fact, one of which is the following due to by Cauchy. Given a convex polyhedron, remove any of its faces. By pulling the edges of the missing face away from each other, deform all the rest into a planar graph of points and curves, in such a way that the perimeter of the missing face is placed externally, surrounding the graph obtained. Observe that, since our convex polyhedron is homeomorphic to the sphere this is always possible. The number of vertices and edges has remained the same, but the number of faces has been reduced by $1$. Therefore, proving Euler's formula for the polyhedron reduces to proving $V\,-\,E\,+\,F\,=\,1$ for this deformed, planar object. In this planar graph, if there is a face with more than three sides, we draw a diagonal, \textit{i.e.} a curve through the face connecting two vertices that are not yet connected. This adds one edge and one face but it does \textit{not} change the number of vertices. As a consequence, drawing an edge does not alter the quantity $V\,-\,E\,+\,F$. We iterate this process until all the faces are triangles. Next we apply repeatedly one of the following operations so that the exterior boundary is always a simple cycle -- \textit{i.e.} the exterior network bounds a polygon with connected interior.
\begin{itemize}
    \item[1.] Remove a triangle with only one edge adjacent to the exterior. Notice that this decreases both the number of edges and faces by one but it does not change the number of vertices. So $V\,-\,E\,+\,F$ is unaltered by this operation.
    \smallskip
    \item[2.] Remove a triangle with two edges shared by the exterior of the network. Once such a triangle is removed, the number of vertices and faces both decrease by one and the number of edges decreases by two. Therefore, $V\,-\,E\,+\,F$ remains unaltered.
\end{itemize}

\noindent These transformations eventually reduce the planar graph to a single triangle and it is easy to check that $V\,-\,E\,+\,F\,=\,1$ in this case. Hence $V\,-\,E\,+\,F\,=\,1$ holds for the planar graph we have obtained. As a consequence, $V\,-\,E\,+\,F\,=\,2$ for a convex polyhedron and the Euler's observation follows as desired.

\smallskip

\noindent Let us move back on our main goal, we would like to show that only five convex regular polyhedrons exist. Recall that we require all faces to be congruent to a given regular $p\,-\,$polygon. Beyond Euler's observation, for a regular polyhedron we may observe the following relations hold. In the first place we notice that each face is adjacent to $p$ edges and each edge is adjacent to exactly two faces. Hence $p\,F\,=\,2\,E$. On the other hand, each vertex is adjacent to exactly $q$ faces and each face is adjacent to $p$ vertices. Hence $p\,F=q\,V$. Therefore
\begin{equation}
    p\,F\,=\,2\,E\,=\,q\,V.
\end{equation}
\noindent By combining this equation with $V\,-\,E\,+\,F\,=\,2$, we get
\begin{equation}
    \frac{2\,E}{q}\,-\, E\,+\, \frac{2\,E}{p}=2
\end{equation}
that is
\begin{equation}
    \frac{1}{p}+\frac{1}{q}=\frac{1}{2}+\frac{1}{E}.
\end{equation}
\noindent Since $E$ is strictly positive, not surprisingly, we find once again condition \eqref{eq:sss} above
\begin{equation}\label{eq:sss2}
    \frac{1}{p}+\frac{1}{q}>\frac{1}{2}.
\end{equation}
\noindent Since a non-degenerate $p-$regular polygon has at least $3$ edges it follows that $p\ge3$. If we restrict ourselves to $q\ge3$, then \eqref{eq:sss2} has only five solutions, \textit{i.e.} the pairs $\{3,3\}$, $\{3,4\}$, $\{4,3\}$, $\{3,5\}$ and $\{5,3\}$ that we have already found above in \S\ref{ssec:regtess}. Therefore there exactly five Platonic solids as already alluded several times along the course of the present survey. These are the tetrahedron that corresponds to the Schl\"afli symbol $\{3,3\}$, the cube $\{4,3\}$, the octahedron $\{3,4\}$, the dodecahedron $\{5,3\}$ and the icosahedron $\{3,5\}$.  

\smallskip

\noindent Clearly $q$ cannot be one because each edge is shared by two faces and therefore each vertex is adjacent to at least two faces. In principle, $q=2$ is admissible; however, in this special case, the reader may convince themselves that the resulting polyhedron is degenerate as it must have exactly two faces. In other words, the resulting polyhedron is obtained as the identification space of two regular $p-$gons glued edge-by-edge. This is, indeed, a degenerate polyhedron in $\mathbb E^3$ -- the $3-$dimensional space -- and the reader may notice a non-casual similarity between this latter and the dihedron introduced in \S\ref{ssec:regtess}.

\subsubsection*{Duality} There is a well-defined notion of \textit{duality} for regular polyhedrons. Let $\mathfrak P$ be a regular polyhedron with $F$ faces and let $\{p_1,\dots,p_F\}$ be their centers. We join two points $p_i$ and $p_j$ with an edge if and only if their respective faces are adjacent. The resulting space turns out to be the $1-$skeleton of a regular polyhedron, say $\mathfrak Q$ inscribed inside $\mathfrak P$. The reader may observe that if the initial polyhedron has $V$ vertices and $F$ faces, the resulting one has $F$ vertices and $V$ faces. In particular, the same construction applied to $\mathfrak Q$ yields the $1-$skeleton of $\mathfrak P$. In the light of this observation we shall that $\mathfrak P$ and $\mathfrak Q$ are dual of each other. 

\smallskip

\noindent We have seen above that may associate to Platonic solid a pair $\{p,q\}$; \textit{i.e.} its Schl\"afli symbol. Some pairs are evidently symmetrical and these symmetries reflect the fact that some regular polyhedra are dual of each other. For instance, the symmetry between $\{4,3\}$ and $\{3,4\}$ reflects the fact that the cube and the octahedron are the dual of each other. Similarly, the symmetry between the pairs $\{5,3\}$ and $\{3,5\}$ reflects the fact that the icosahedron and the dodecahedron are the dual of each other. Finally the pair $\{3,3\}$ is symmetric and, in fact, the tetrahedron is \textit{self-dual}. 

\smallskip

\subsection{From Platonic solids to tessellations of $\mathbb S^2$} We have seen in the preceding section that there are exactly five platonic solids. From each one of them we shall realise a tiling of $\mathbb S^2$. This means that, if we ignore hosohedrons and dihedrons, there are exactly five regular tessellations of $\mathbb S^2$. If divide each tile into right-angle spherical triangles we obtain a symmetric triangular tilings of $\mathbb S^2$. The group of orientation preserving symmetries of each one of them is a spherical triangle group $\Delta(2,p,q)$. However there are only three of such triangle groups, that is $\Delta(2,3,2)$, $\Delta(2,3,4)$ and $\Delta(2,3,5)$. It necessarily follows that some tessellations share the same symmetry group. Our aim now is to show that if two Platonic solids are dual, then they determine the same tiling into right-angle spherical triangles and hence the induced tilings have the same triangle group as the group of symmetries. 

\smallskip

\noindent Let $\mathfrak P$ be a regular polyhedron, then the sphere $\mathbb S^2$ can be considered as the concentric circumscribed sphere whose centre is the centre of symmetry of $\mathfrak P$. We may imagine such a center of symmetry as an infinitesimal light source. As soon as we turn on the light, the faces of $\mathfrak P$ project to the sphere and hence determine on it a regular tiling having a regular (spherical) polygon as a prototile. It is worth mentioning that such a projection is not conformal because the faces of a regular polyhedron are all congruent to a regular polygon in $\mathbb E^2$ whereas the tiles of the induced tiling on $\mathbb S^2$ are spherical polygons (pairwise congruent). We obtain in this way all possible regular tilings of $\mathbb S^2$.

 \begin{figure}[!ht]
     \centering
     \begin{tikzpicture}[line cap=round,line join=round,3d view={40}{35}]
% Dimensiones
\pgfmathsetmacro\ph{(1+sqrt(5))/2} % golden ratio
\pgfmathsetmacro\ed{2}             % edge   (half)
\pgfmathsetmacro\hh{\ed*\ph}       % height (half)
% Vértices
\coordinate (A1) at ( \hh,-\ed,  0);
\coordinate (B1) at ( \hh, \ed,  0);
\coordinate (C1) at (-\hh, \ed,  0);
\coordinate (D1) at (-\hh,-\ed,  0);
\coordinate (A2) at ( \ed,  0,-\hh);
\coordinate (B2) at (-\ed,  0,-\hh);
\coordinate (C2) at (-\ed,  0, \hh);
\coordinate (D2) at ( \ed,  0, \hh);
\coordinate (A3) at (  0, \hh,-\ed);
\coordinate (B3) at (  0, \hh, \ed);
\coordinate (C3) at (  0,-\hh, \ed);
\coordinate (D3) at (  0,-\hh,-\ed);
% Faces
\mytriangle{A2}{A1}{D3}{0.4};
\mytriangle{A1}{B1}{A2}{0.5};
\mytriangle{D1}{C3}{D3}{0.2};
\mytriangle{A1}{D3}{C3}{0.3};
\mytriangle{C3}{D2}{A1}{0.0};
\mytriangle{B1}{A1}{D2}{0.1};
\mytriangle{D2}{B3}{B1}{0.3};
\mytriangle{C2}{D1}{C3}{0.2};
\mytriangle{D2}{C3}{C2}{0.1};
\mytriangle{B3}{C2}{D2}{0.2};

\iffalse
%\draw[red, ultra thick] (C3) to ($1/3*(C3)+1/3*(D2)+1/3*(A1)$);
\draw[white, ultra thick] ($(C3)!0.5!(D2)$) to ($1/3*(C3)+1/3*(D2)+1/3*(A1)$);
\draw[white, ultra thick] ($(C3)!0.5!(D2)$) to ($1/3*(C3)+1/3*(C2)+1/3*(D2)$);
\draw[white, ultra thick] ($(C3)!0.5!(C2)$) to ($1/3*(C3)+1/3*(C2)+1/3*(D2)$);
\draw[white, ultra thick] ($(C3)!0.5!(C2)$) to ($1/3*(C3)+1/3*(C2)+1/3*(D1)$);
\draw[white, ultra thick] ($(D1)!0.5!(C3)$) to ($1/3*(C3)+1/3*(C2)+1/3*(D1)$);
\draw[white, ultra thick] ($(D1)!0.5!(C3)$) to ($1/3*(C3)+1/3*(D3)+1/3*(D1)$);
\draw[white, ultra thick] ($(D3)!0.5!(C3)$) to ($1/3*(C3)+1/3*(D3)+1/3*(D1)$);
\draw[white, ultra thick] ($(D3)!0.5!(C3)$) to ($1/3*(C3)+1/3*(D3)+1/3*(A1)$);
\draw[white, ultra thick] ($(A1)!0.5!(C3)$) to ($1/3*(C3)+1/3*(D3)+1/3*(A1)$);
\draw[white, ultra thick] ($(A1)!0.5!(C3)$) to ($1/3*(C3)+1/3*(D2)+1/3*(A1)$);

%%
\draw[white, ultra thick] ($(A1)!0.5!(D2)$) to ($1/3*(C3)+1/3*(D2)+1/3*(A1)$);
\fi
%\fill ($(C3)!0.5!(D2)$) circle (1pt);

\end{tikzpicture}
     \caption{Icosahedron with tiled faces. Each tile of each face has a "kite" shape obtained by gluing two right-angle Euclidean triangle with angles $\big( \frac\pi2,\,\frac\pi3,\,\frac\pi6\big)$. We may count exactly $60$ kites on the icosahedron, in fact $3$ for each face. The symmetry group of acts transitively, \textit{i.e.} any pair of kites are related by a unique symmetry of the icosahedron. As we shall see below, its symmetry group identifies with $\Delta(2,3,5)\cong A_5$ -- the alternating group of five elements which has order $60$. %The red polygon represents a decagon on the icosahedron where five, pairwise non-adjacent, out of ten vertices have angle $\pi$. Such pentagon projects to a spherical regular pentagon on the circumscribed sphere with inner angles equal to $\frac{2\pi}{3}$. 
     }
     \label{fig:regpolico}
 \end{figure}
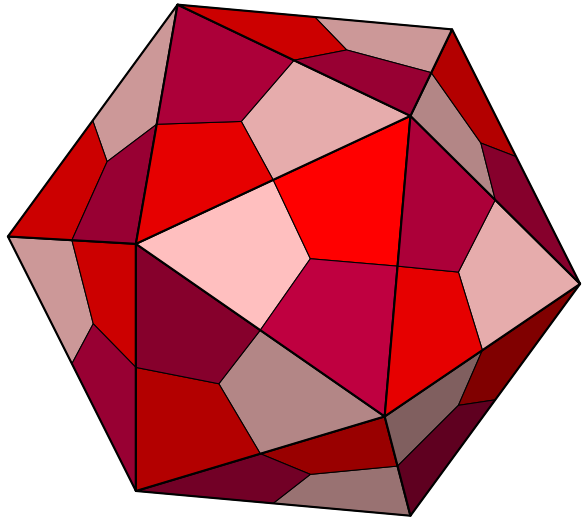
 
 \smallskip

\noindent Although there are five regular tilings, there are only three symmetric triangular tilings. Given a regular polyhedron, we can consider its barycentric subdivision into smaller triangles, see Figure \eqref{fig:regpolico}. Since all faces are all isometric to a regular polygon in $\mathbb E^2$, all the triangles are pairwise congruent to a certain right-angle Euclidean triangle in $\mathbb E^2$. As a consequence, as soon as we turn on the light source, the subdivided faces project to a symmetric triangular tessellation of $\mathbb S^2$. Unlike symmetric triangular tessellations of $\mathbb E^2$ and $\mathbb H^2$, any symmetric triangular tessellation of $\mathbb S^2$ has finitely many tiles. In fact, let $\mathcal T$ be a symmetric triangular tessellation of $\mathbb S^2$ with prototile $t$. Recall that, as the sphere is compact, $t$ is a compact triangle with finite volume $\text{vol}(\,t\,)$. Since the volume of the unit sphere is equal to $4\pi$, and $\mathcal T$ is symmetric -- this is a crucial assumption -- we obtain that $\mathcal T$ has 
\begin{equation}
    |\,\mathcal T\,|=\frac{4\pi}{\text{vol}(\,t\,)}
\end{equation}
tiles, where $|\,\cdot\,|$ denotes the cardinality of $\mathcal T$. This is far from being specific to triangular tilings and, in fact, we have the following

\begin{prop}\label{prop:counttile}
Let $\mathcal T$ be a symmetric tessellation of $\mathbb S^2$ with a polygon $t$ as a prototile in $\mathbb S^2$. Then 
\[ |\,\mathcal T\,|=\frac{4\pi}{\textnormal{vol}(\,t\,)}.
\]
\end{prop}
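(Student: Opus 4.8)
The plan is to use additivity of the Riemannian area measure on $\mathbb{S}^2$ together with the fact that, in a symmetric tessellation, all tiles are congruent and hence have equal area. First I would record that $\mathcal{T}$ has only finitely many tiles: since $\mathcal{T}$ is symmetric, Proposition \ref{prop:discact} tells us that $\textnormal{Sym}(\mathcal{T})$ acts properly discontinuously on the \emph{compact} surface $\mathbb{S}^2$, which forces $\textnormal{Sym}(\mathcal{T})$ to be a finite group; as every tile is of the form $g\cdot t$ for some $g\in\textnormal{Sym}(\mathcal{T})$, the collection of tiles is finite, say $|\mathcal{T}|=N$. (Equivalently, one can argue directly from local finiteness of $\mathcal{T}$ plus compactness of $\mathbb{S}^2$.)

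Next I would invoke congruence. Because $\mathcal{T}$ is symmetric, for each tile $\tau\in\mathcal{T}$ there is an isometry $g\in\textnormal{Sym}(\mathcal{T})\subset\textnormal{Iso}(\mathbb{S}^2)$ with $g\cdot t=\tau$, and since isometries preserve the Riemannian volume we get $\textnormal{vol}(\tau)=\textnormal{vol}(t)$ for every $\tau$. Note also that $\textnormal{vol}(t)>0$ since $t$ is a non-degenerate spherical polygon, so the quotient $4\pi/\textnormal{vol}(t)$ makes sense.

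Then comes the additivity step. By definition of a tessellation the tiles have pairwise disjoint interiors and $\bigcup_{\tau\in\mathcal{T}}\tau=\mathbb{S}^2$. Each tile is a compact polygon whose topological boundary is a finite union of geodesic arcs, so $\textnormal{vol}(\tau)=\textnormal{vol}(\textnormal{int}\,\tau)$; moreover the union of all edges and vertices over all (finitely many) tiles is a finite union of geodesic arcs, hence a set of two-dimensional measure zero. Discarding this null set, the interiors $\textnormal{int}\,\tau$ form a finite pairwise-disjoint family whose union has full measure in $\mathbb{S}^2$, and finite additivity of the area measure yields
\[
4\pi \;=\; \textnormal{vol}(\mathbb{S}^2) \;=\; \sum_{\tau\in\mathcal{T}}\textnormal{vol}(\tau) \;=\; N\cdot\textnormal{vol}(t),
\]
whence $N=4\pi/\textnormal{vol}(t)$, as claimed.

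The only genuinely delicate point is the bookkeeping of the boundary: one must be sure that the union of all edges and vertices of all tiles is a null set. This is immediate once finiteness of $\mathcal{T}$ is established, since each edge is a geodesic segment of zero area and there are finitely many of them; establishing that finiteness (via proper discontinuity and compactness of $\mathbb{S}^2$) is therefore the real content, and everything else is a formal consequence of isometry-invariance of volume and the definition of a symmetric tessellation.
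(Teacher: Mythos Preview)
Your proof is correct and follows essentially the same approach as the paper: the paper simply observes that compactness of $\mathbb{S}^2$ gives finite volume, that symmetry forces all tiles to be congruent and hence of equal area, and concludes $|\mathcal{T}|\cdot\textnormal{vol}(t)=4\pi$. Your version is more carefully worded (explicitly establishing finiteness via proper discontinuity and handling the measure-zero boundary), but the underlying idea is identical.
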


\noindent The proof is substantially identical to the argument above. We can therefore count the number of triangular tiles of a given symmetric triangular tessellation. In what follows, given a triangle $t$ and a vertex $v$ of $t$ we define the \textit{valence} of $v$ as the number of triangles adjacent to $v$. By keeping this definition in mind, for each regular polyhedron we count the triangles of the barycentric subdivision of their faces and the valences of their vertices.

\begin{itemize}
    \item[1.] \textit{Tetrahedron} has $4$ faces all congruent to an equilateral triangle. Hence the barycentric subdivision has $24$ right-angle triangles   with inner angles $\pi/2$, $\pi/3$ and $\pi/6$. Each vertex of the resulting subdivision has valence $6$, that is each vertex is adjacent to $6$ triangles.
    \smallskip
    \item[2.] \textit{Cube} has $6$ faces all congruent to a square. Hence the barycentric subdivision has $48$ right-angle triangles with inner angles $\pi/2$, $\pi/4$ and $\pi/4$. In this case, the resulting subdivision has $6$ vertices with valence $8$ and $8$ vertices with valence $6$.
    \smallskip
    \item[3.] \textit{Octahedron} has $8$ faces all congruent to an equilateral triangle. Hence the barycentric subdivision has $48$ right-angle triangles with inner angles $\pi/2$, $\pi/3$ and $\pi/6$. Like the case above, the resulting barycentric subdivision has $6$ vertices with valence $8$ and $8$ vertices with valence $6$.
    \smallskip
    \item[4.] \textit{Dodecahedron} has $12$ faces all congruent to a pentagon. Hence the barycentric subdivision has $120$ right-angle triangles with inner angles $\pi/2$, $3\pi/10$ and $\pi/5$. In this case, the resulting subdivision has $12$ vertices with valence $10$ and $20$ vertices with valence $6$. Finally
    \smallskip
    \item[5.] \textit{Icosahedron} has $20$ faces all congruent to an equilateral triangle. Hence the barycentric subdivision has $120$ right-angle triangles with inner angles $\pi/2$, $\pi/3$ and $\pi/6$. Like the dodecahedron, the resulting barycentric subdivision has $12$ vertices with valence $10$ and $20$ vertices with valence $6$.
\end{itemize}
 
 \noindent According to the list above, we can deduce the following
 
 \begin{prop}
 Each Platonic solid admits a triangulation into right-angle triangles determined by barycentric subdivisions of its faces. In particular, if two Platonic solids are one the dual of the other then they admit a triangulation with the same number of triangular tiles whose prototiles have the same valences at the vertices.
 \end{prop}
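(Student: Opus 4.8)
The plan is to deduce both halves of the statement from the geometry of the barycentric subdivision of a single regular polygon, and then from the counting identities $pF=2E=qV$ already derived in this section.

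First I would treat one face in isolation. A face of the Platonic solid with Schl\"{a}fli symbol $\{p,q\}$ is a regular Euclidean $p$-gon $\mathcal F$, and joining its center $c$ to its $p$ vertices and to its $p$ edge-midpoints cuts $\mathcal F$ into $2p$ triangles, each having one corner at $c$, one at a vertex of $\mathcal F$, and one at an edge-midpoint. The angle at $c$ is $\pi/p$; the angle at the vertex of $\mathcal F$ is half the interior angle of $\mathcal F$, namely $\tfrac12\big(1-\tfrac2p\big)\pi=\tfrac\pi2-\tfrac\pi p$; and since the apothem meets the edge orthogonally, the angle at the edge-midpoint is $\pi/2$. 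So these $2p$ triangles are pairwise congruent right-angle triangles. Since all $F$ faces of $\mathfrak P$ are congruent to $\mathcal F$, performing this subdivision on every face yields $2pF$ pairwise congruent right-angle triangles covering $\partial\mathfrak P\cong\mathbb S^2$, which establishes the first assertion; by $pF=2E$ this triangulation has $2pF=4E$ tiles, and radially projecting onto the circumscribed sphere turns it into the associated symmetric spherical triangular tessellation.

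Next I would record the valences of the three types of vertex occurring among the corners of the prototile. A face-center is surrounded by the $2p$ triangles subdividing its own face, so has valence $2p$; an edge-midpoint lies on the two faces sharing that edge and receives two triangles from each, so has valence $4$; and a polyhedron vertex lies on the $q$ faces meeting there and receives two triangles from each, so has valence $2q$. Thus the multiset of vertex-valences of the triangulation of $\{p,q\}$ is $\{2p,\,4,\,2q\}$.

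Finally, for the duality statement I would use that the dual $\mathfrak Q$ of $\mathfrak P=\{p,q\}$ has Schl\"{a}fli symbol $\{q,p\}$ with $F(\mathfrak Q)=V$; then the triangulation of $\mathfrak Q$ has $2q\,F(\mathfrak Q)=2qV=4E$ tiles (using $qV=2E$) and vertex-valence multiset $\{2q,\,4,\,2p\}$ — the same tile count and the same valences as for $\mathfrak P$, which is exactly the claim. One can even say slightly more: the duality construction sends face-centers of $\mathfrak P$ to vertices of $\mathfrak Q$ and fixes edge-midpoints, so the two triangulations coincide combinatorially. The step I expect to need the most care is the edge-to-edge gluing — verifying that the barycentric pieces of two adjacent faces meet exactly along the pair of half-edges running from their shared edge-midpoint to the two shared polyhedron vertices, with no gap or overlap — together with the valence bookkeeping at the edge-midpoints; once those are in place, everything else is a direct substitution into $pF=2E=qV$, so there is no real obstacle.
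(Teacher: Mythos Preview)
Your proposal is correct, and in fact cleaner than what the paper does. The paper does not give a uniform argument: immediately before the proposition it simply enumerates the five Platonic solids one at a time, counting for each the number of triangles in the barycentric subdivision and the valences of the resulting vertices (e.g.\ ``Cube has $6$ faces \dots hence the barycentric subdivision has $48$ right-angle triangles \dots the resulting subdivision has $6$ vertices with valence $8$ and $8$ vertices with valence $6$''), and then states that the proposition follows ``according to the list above''. In particular the paper never writes down the general formulas $2pF=4E$ for the tile count or $\{2p,4,2q\}$ for the valence multiset; it just observes the coincidences pair by pair.

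Your argument replaces that enumeration with a single computation in the Schl\"afli parameters and then invokes $pF=2E=qV$ once, which is both shorter and explains \emph{why} duality preserves the triangulation rather than merely verifying it. Your closing remark --- that under the duality correspondence the two barycentric triangulations actually coincide, with face-centers and polyhedron vertices swapped and edge-midpoints fixed --- is a genuine strengthening that the paper does not state. The only mild caution is that the statement in the paper concerns the triangulation of the solid itself (flat faces), so your Euclidean angle computation $\big(\tfrac{\pi}{p},\,\tfrac{\pi}{2}-\tfrac{\pi}{p},\,\tfrac{\pi}{2}\big)$ is the appropriate one here; the spherical angles $\big(\tfrac{\pi}{p},\,\tfrac{\pi}{q},\,\tfrac{\pi}{2}\big)$ only enter after the radial projection discussed subsequently.
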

 
 \noindent Since the projection clearly does not alter the number of tiles and valence of the vertices, it follows that the spherical tessellations induced by the projection will have the same number of tiles and the vertices have the same valences. The key fact is the following
 
 \begin{prop}
 Let $\mathcal T_1$ and $\mathcal T_2$ be two symmetric triangular tessellations of $\mathbb S^2$ with the same number of tiles. Then $\mathcal T_1=R\cdot\mathcal T_2$, for some $R\in\textnormal{Iso}(\mathbb S^2)$, if and only if their respective prototiles have the same valences at the vertices.
 \end{prop}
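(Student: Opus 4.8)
The plan is to split the biconditional into its two implications. The forward one is routine, so I would dispose of it first: if $\mathcal T_1=R\cdot\mathcal T_2$ for some $R\in\textnormal{Iso}(\mathbb S^2)$, then $R$ carries the $1$-skeleton of $\mathcal T_2$ onto that of $\mathcal T_1$ and, for every vertex $v$ of $\mathcal T_2$, restricts to a bijection between the set of tiles of $\mathcal T_2$ incident to $v$ and the set of tiles of $\mathcal T_1$ incident to $R(v)$; hence $R$ preserves valences of vertices. In particular $R$ sends the prototile $t_2$ to a tile of $\mathcal T_1$ whose three vertices have the same valences as those of $t_2$; since $\mathcal T_1$ is symmetric, all of its tiles are congruent to $t_1$ with matching vertex combinatorics, so $t_1$ and $t_2$ have the same valences at the vertices.

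For the converse I would first recall, from \S\ref{tg}, that a symmetric triangular tessellation of $\mathbb S^2$ is the reflection tessellation of a spherical $(p,q,r)$-triangle with $\frac1p+\frac1q+\frac1r>1$: its prototile is such a triangle and its full symmetry group is $W(p,q,r)$, generated by the reflections in the three sides of the prototile. I would then read off the valences: around the vertex carrying the angle $\pi/p$, the two reflections in the sides meeting there generate a dihedral group of order $2p$ which acts freely and transitively on the incident tiles, so that vertex has valence $2p$; likewise the remaining two vertices have valences $2q$ and $2r$. Thus the multiset of vertex valences of $\mathcal T_i$ is $\{2p_i,2q_i,2r_i\}$, and the hypothesis that these multisets coincide forces, after relabelling, $(p_1,q_1,r_1)=(p_2,q_2,r_2)=:(p,q,r)$. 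I would also note that the hypothesis on the number of tiles is then automatically fulfilled: by Proposition \ref{prop:counttile} it is equivalent to equality of the prototile areas, which follows by Gauss--Bonnet from equality of the angle triples.

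It then remains to produce the isometry. Two spherical triangles with the same triple of interior angles are congruent, so there exists $R\in\textnormal{Iso}(\mathbb S^2)$ carrying the prototile $t_2$ of $\mathcal T_2$ onto the prototile $t_1$ of $\mathcal T_1$ as positioned triangles. Then $R\cdot\mathcal T_2$ is a symmetric triangular tessellation of $\mathbb S^2$ one of whose tiles is literally $t_1$. Since a triangular tessellation is recovered from any single one of its tiles by iterating the reflections in its sides — this is the construction from the proof of \ref{thm:pthm}, specialised to side-pairings given by reflections — both $R\cdot\mathcal T_2$ and $\mathcal T_1$ coincide with the orbit $\{\,g\cdot t_1 \ :\ g\in W(p,q,r)\,\}$, and therefore $R\cdot\mathcal T_2=\mathcal T_1$.

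I expect the only genuinely delicate point to be the very first assertion of the converse, namely that \emph{every} symmetric triangular tessellation of $\mathbb S^2$ is a reflection tessellation of a $(p,q,r)$-triangle — equivalently, that the interior angle at a valence-$n$ vertex equals exactly $2\pi/n$ and that any two tiles sharing an edge are mirror images across it. If one does not wish to quote this directly from \S\ref{tg}, the argument would be: the configuration of tiles around a valence-$n$ vertex is invariant under a dihedral group of order $n$, which pins the angle there to $2\pi/n$; consequently the angle triple of the prototile satisfies $\frac1p+\frac1q+\frac1r>1$, the prototile satisfies the side--angle conditions, and \ref{thm:pthm} reconstructs from it the unique tessellation having it as prototile. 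The point requiring care is the exclusion of ``chiral'' symmetric triangular tessellations, but the finiteness of spherical tessellations (Proposition \ref{prop:counttile}) keeps this bookkeeping manageable.
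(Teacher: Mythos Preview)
Your proof is correct and follows essentially the same route as the paper's: equal valences force equal angle triples, spherical triangles with the same angles are congruent, and the reflection construction then identifies the two tessellations. The paper's version is terser --- it first invokes Proposition~\ref{prop:counttile} to convert the tile-count hypothesis into equal prototile areas and then asserts without further comment that equal valences give equal inner angles --- whereas you make the underlying $(p,q,r)$ structure explicit and, in doing so, rightly observe that the tile-count hypothesis is already implied by the valence hypothesis.
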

 
 \begin{proof}
 Let $t_1$ and $t_2$ be the prototiles of $\mathcal T_1$ and $\mathcal T_2$ respectively.  Proposition \ref{prop:counttile} applies because both tilings are symmetric and hence
 \begin{equation} \frac{4\pi}{\textnormal{vol}(\,t_1\,)}=|\,\mathcal T_1\,|=|\,\mathcal T_2\,|=\frac{4\pi}{\textnormal{vol}(\,t_2\,)}.
 \end{equation}
 As a consequence $\text{vol}(\,t_1\,)=\text{vol}(\,t_2\,)$. Since the prototiles have the same valences at the vertices then they have the same inner angles. Therefore, $t_1=R\cdot t_2$ and hence $\mathcal T_1\,=\,R\cdot\mathcal T_2$ for some $R\in\textnormal{Iso}(\mathbb S^2)$.
 \end{proof}
 
 \noindent As a consequence we have the following
 
\begin{thm}
A tetrahedron determines a symmetric tiling of $\mathbb S^2$ with prototile a spherical right-angle triangle with angles $\big(\pi/2,\,\pi/3,\,\pi/3)$. The cube and the octahedron determine the same symmetric tiling of $\mathbb S^2$ with prototile a spherical right-angle triangle with angles $\big(\pi/2,\,\pi/3,\,\pi/4)$. Finally, the dodecahedron and the icosahedron determine the same symmetric tiling of $\mathbb S^2$ with prototile a spherical right-angle triangle with angles $\big(\pi/2,\,\pi/3,\,\pi/5)$.
\end{thm}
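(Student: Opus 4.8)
The plan is to produce, for each Platonic solid $\mathfrak P$, the symmetric triangular tessellation of $\mathbb S^2$ that it induces, to read off its prototile from the combinatorics of the barycentric subdivision, and then to identify the tessellations coming from dual solids by feeding the resulting data into Proposition \ref{prop:counttile} and the rigidity criterion established just above (equal tile count and equal prototile valences force agreement up to an isometry of $\mathbb S^2$). I would begin by recalling the construction already used in this section: centre $\mathbb S^2$ at the centre $o$ of $\mathfrak P$, subdivide each face of $\mathfrak P$ barycentrically, and project the subdivided faces radially from $o$ onto $\mathbb S^2$. A segment lying on a face of $\mathfrak P$ spans, together with $o$, a plane through the origin, so its radial image is a great-circle arc; hence the images of the barycentric triangles are genuine geodesic triangles and they tile $\mathbb S^2$. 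Since $\textnormal{Sym}(\mathfrak P)$ acts transitively on the flags of $\mathfrak P$ --- equivalently on its barycentric triangles --- and every such symmetry fixes $o$ and therefore restricts to an isometry of $\mathbb S^2$ preserving the configuration, the tessellation $\mathcal T_{\mathfrak P}$ so obtained is symmetric. By Proposition \ref{prop:counttile}, or simply by counting the $2p$ barycentric triangles on each of the $F$ faces of a $\{p,q\}$-solid, $\mathcal T_{\mathfrak P}$ has $2pF = 4E$ tiles: this gives $24$, $48$, $48$, $120$, $120$ for the tetrahedron, cube, octahedron, dodecahedron and icosahedron.

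The next, and most delicate, step is to pin down the prototile $t_{\mathfrak P}$. The Euclidean triangles of the barycentric subdivision --- say the $(\pi/2,\pi/3,\pi/6)$ triangles cut out of an equilateral face --- are \emph{not} the spherical tiles, because radial projection distorts angles; already for the tetrahedron the $\pi/6$ corner, which sits at a vertex of the solid where the total flat angle is only $\pi$, must open up to $\pi/3$ on the sphere. So the angles of $t_{\mathfrak P}$ have to be recovered combinatorially, via the observation that in a symmetric triangular tessellation a vertex of valence $m$ is surrounded by $m$ mutually congruent copies of the prototile whose angles there sum to $2\pi$, so the prototile angle at that vertex is $2\pi/m$. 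For a $\{p,q\}$-solid the vertices of $\mathcal T_{\mathfrak P}$ fall into three $\textnormal{Sym}(\mathfrak P)$-orbits: the images of the vertices of $\mathfrak P$, where $q$ faces and hence $2q$ barycentric triangles meet; the images of the edge midpoints, where the two adjacent faces contribute $2+2=4$ triangles; and the images of the face centres, where all $2p$ barycentric triangles of that face meet. Hence $t_{\mathfrak P}$ is a right-angle spherical triangle with angles $\pi/q$, $\pi/2$ and $\pi/p$, and substituting the Schl\"afli symbols $\{3,3\}$, $\{4,3\}$, $\{3,4\}$, $\{5,3\}$, $\{3,5\}$ yields the triples $(\pi/2,\pi/3,\pi/3)$, $(\pi/2,\pi/3,\pi/4)$, $(\pi/2,\pi/3,\pi/4)$, $(\pi/2,\pi/3,\pi/5)$ and $(\pi/2,\pi/3,\pi/5)$.

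Finally I would invoke the rigidity criterion to identify the tessellations arising from dual solids. The tessellations $\mathcal T_{\textnormal{cube}}$ and $\mathcal T_{\textnormal{oct}}$ have the same number of tiles ($48$) and prototiles with the same multiset of vertex valences ($\{4,6,8\}$), hence $\mathcal T_{\textnormal{cube}} = R\cdot\mathcal T_{\textnormal{oct}}$ for some $R\in\textnormal{Iso}(\mathbb S^2)$; likewise $\mathcal T_{\textnormal{dodec}}$ and $\mathcal T_{\textnormal{icos}}$ have $120$ tiles and prototile valences $\{4,6,10\}$, whence $\mathcal T_{\textnormal{dodec}} = R'\cdot\mathcal T_{\textnormal{icos}}$ for some $R'\in\textnormal{Iso}(\mathbb S^2)$, while the tetrahedral tessellation, with its $24$ tiles, stands on its own. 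This establishes the three assertions. The only genuine obstacle is the subtlety flagged above --- not importing the Euclidean angles of the barycentric subdivision onto the sphere, but instead deducing the spherical angles from the valence counts; the remaining arguments are bookkeeping already prepared by the preceding propositions.
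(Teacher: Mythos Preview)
Your proposal is correct and follows essentially the same route as the paper: barycentric subdivision of each solid, radial projection, tile count via $2pF$, and then the rigidity criterion (equal tile count plus equal prototile valences) to identify the tessellations coming from dual solids. Your treatment of the prototile angles is in fact more careful than the paper's exposition: the paper records the \emph{Euclidean} angles of the flat barycentric triangles (e.g.\ $(\pi/2,\pi/3,\pi/6)$ for the tetrahedron) and then simply states the theorem, whereas you make explicit the passage from valences $(2q,4,2p)$ to spherical angles $(\pi/q,\pi/2,\pi/p)$, which is exactly the step that bridges the list and the statement.
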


\noindent It remains to determine the groups of symmetries of these triangular tilings of $\mathbb S^2$. By keeping in mind our previous discussion in \S\ref{ssec:tritiles} and \S\ref{tg2} we have the following

\begin{cor}
Let $\mathcal T$ be the symmetric triangular tiling of the sphere determined by the barycentric subdivision of the tetrahedron. Then $\text{Sym}^+(\mathcal T)\cong\Delta(2,3,3)$. Similarly, let $\mathcal T$ be the symmetric triangular tiling of the sphere determined by the barycentric subdivision of the cube or the octahedron. Then $\text{Sym}^+(\mathcal T)\cong\Delta(2,3,4)$. Finally, let $\mathcal T$ be the symmetric triangular tiling of the sphere determined by the barycentric subdivision of the dodecahedron or the icosahedron. Then $\text{Sym}^+(\mathcal T)\cong\Delta(2,3,5)$.
\end{cor}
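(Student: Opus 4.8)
The plan is to deduce the Corollary from the Theorem just proved together with the identification of triangle groups with symmetry groups of triangular tessellations carried out in \S\ref{tg2}; no genuinely new geometry is needed. Fix one of the five Platonic solids and let $n\in\{3,4,5\}$ be the integer attached to it, so $n=3$ for the tetrahedron, $n=4$ for the cube and octahedron, and $n=5$ for the dodecahedron and icosahedron. By the preceding Theorem, the tessellation $\mathcal T$ produced by projecting the barycentric subdivision of the solid onto the circumscribed sphere is a symmetric triangular tessellation of $\mathbb S^2$ whose prototile $t$ is a spherical right-angle triangle with inner angles $\pi/2$, $\pi/3$, $\pi/n$. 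In particular $t$ is a $(2,3,n)$-triangle in the sense of \S\ref{ssec:tritiles}, since $2,3,n\in\mathbb{Z}_{\ge 2}$, and the triple $(2,3,n)$ is spherical because $\tfrac12+\tfrac13+\tfrac1n>1$ for $n\le 5$, consistently with $\mathcal T$ tessellating the sphere.

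Next I would match $\mathcal T$ with the canonical reflection tessellation of a $(2,3,n)$-triangle. Since $t$ has rational angles, the quadrilateral obtained by doubling $t$ satisfies the side and angle conditions (see \S\ref{ssec:tritiles}), so \ref{thm:pthm} applies; as explained in \S\ref{tg2}, this produces a symmetric triangular tessellation $\mathcal T'$ of $\mathbb S^2$ with prototile a congruent copy of $t$, whose full symmetry group is $W(2,3,n)$ and whose subgroup of orientation-preserving symmetries is the index-two subgroup $\Delta(2,3,n)=W(2,3,n)/\{\pm1\}$. Both $\mathcal T$ and $\mathcal T'$ are symmetric triangular tessellations of $\mathbb S^2$ with isometric prototiles; by Proposition \ref{prop:counttile} they have the same number of tiles, namely $4\pi/\mathrm{vol}(t)$, and, sharing the same inner angles, their prototiles have the same valences at every vertex. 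Hence the Proposition characterising when two symmetric triangular tessellations of $\mathbb S^2$ differ by an ambient isometry provides $R\in\mathrm{Iso}(\mathbb S^2)$ with $\mathcal T=R\cdot\mathcal T'$, so that $\text{Sym}^+(\mathcal T)=R\,\text{Sym}^+(\mathcal T')\,R^{-1}$.

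Finally I would combine the two steps: $\text{Sym}^+(\mathcal T)\cong\text{Sym}^+(\mathcal T')\cong\Delta(2,3,n)$, and specialising $n=3$, $4$, $5$ gives the three asserted isomorphisms with $\Delta(2,3,3)$, $\Delta(2,3,4)$, $\Delta(2,3,5)$. The one point demanding care is the identification $\mathcal T=R\cdot\mathcal T'$: one must be sure that $\mathcal T$ really is the finest (canonical reflection) tessellation attached to its prototile, i.e.\ that no tile of $\mathcal T$ carries a self-symmetry lying in $\text{Sym}(\mathcal T)$ that would force a further subdivision. This can only occur when the prototile is isosceles, hence only for $n=3$; there the mirror axis of the $(\pi/2,\pi/3,\pi/3)$ barycentric triangle of the tetrahedron joins an edge-midpoint of the tetrahedron to the midpoint of a segment from a vertex to a face-centre, and the latter point is not a vertex of the barycentric subdivision, so the axis is not an edge of $\mathcal T$ and the corresponding reflection does not preserve $\mathcal T$. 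Once this is checked, the hypotheses of the uniqueness Proposition are precisely those supplied by Proposition \ref{prop:counttile} and the constructions preceding it, and the argument closes.
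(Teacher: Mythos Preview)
Your proposal is correct and follows essentially the same route the paper intends: the Corollary is stated immediately after the Theorem with only the remark ``By keeping in mind our previous discussion in \S\ref{ssec:tritiles} and \S\ref{tg2}'', so the paper's own proof is precisely the two-step argument you spell out --- identify the prototile as a $(2,3,n)$-triangle via the Theorem, then invoke the identification in \S\ref{tg2} of $W(2,3,n)$ (resp.\ $\Delta(2,3,n)$) with the full (resp.\ orientation-preserving) symmetry group of the corresponding reflection tiling. Your extra care in the isosceles case $n=3$ is a welcome sanity check, though strictly speaking it is already absorbed into the paper's blanket assertion in \S\ref{tg2} that $W(p,q,r)$ is the \emph{full} symmetry group of the $(p,q,r)$ triangular tessellation.
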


 \begin{figure}[!ht]
  \centering
  \includegraphics[scale=0.35]{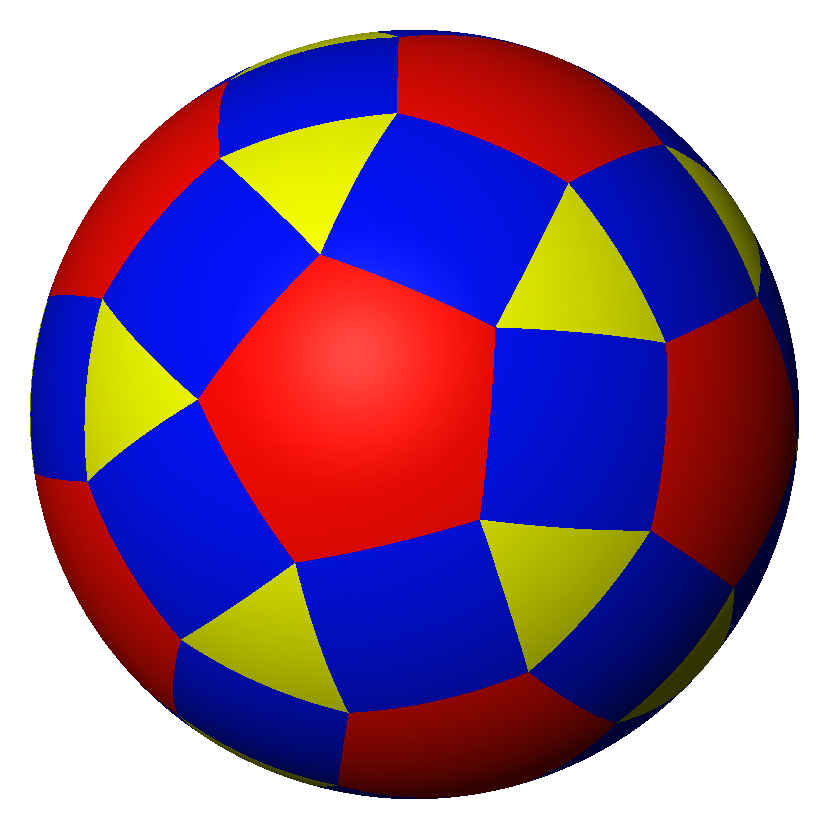}
  \caption{Rhombitrihexagonal tessellation of the sphere. This is an example of semi-regular tessellation that arises from a semi-regular Platonic solid.}
  \label{srhomb}
\end{figure}
 
 \subsection{A short digression to semi-regular Platonic solids}\label{ssec:semiplatsol} We conclude the present Section by spending a few words on semi-regular polyhedra. A semi-regular polyhedron a is a convex polyhedron such that all faces are all regular polygons in $\mathbb E^2$ but they are no longer pairwise congruent. Of course Platonic solids fall into this larger category. However, as soon as the faces are no longer required to be congruent to a given polygon in $\mathbb E^2$, we have plenty of possibilities. In fact, infinitely many. Semi-regular polyhedra include Archimedean solids, prisms and antiprisms. Archimedean solids form a finite family of convex polyhedra, but prisms and antiprisms are both infinite families. By projecting the faces of these semi-regular polyhedra as done above for Platonic solids we obtain semi-regular tilings of $\mathbb S^2$. An example of such a tiling is depicted in Figure \eqref{srhomb} and the reader may observe that it is the spherical counterpart of the Rhombitrihexagonal tessellation of the Poincar\'e disc.
 
\bibliographystyle{amsalpha}
\bibliography{tess}

\end{document}